\begin{document}


\newcommand{\CH}{conv(G)}

\newcommand{\tubingset}[1]{\mathbb{T}_{#1}}

\newcommand{\HY} {{\mathcal H}(G)}
\newcommand{\C} {{\mathbb C}}                              
\newcommand{\R} {{\mathbb R}}                              

\newcommand{\US} {{s}}  
\newcommand{\LF} {{m}}  

\newcommand{\K} {\mathcal{K}}                           

\newcommand{\KG} {{\mathcal{K}} G}
\newcommand{\CG} {{\mathcal{C}} G}                
\newcommand{\Y} {H}
\newcommand{\la} {\langle}
\newcommand{\ra} {\rangle}

\newcommand{\basec}{\triangle}
\newcommand{\base}{\triangle \hspace{-.11in} \triangle}
\newcommand{\tbase}{\triangle \hspace{-.11in} \triangle^*}

\newcommand{\hide}[1]{}

\newcommand{\red} {r}
\newcommand{\ray} {\rho}
\newcommand{\per}   {\mathcal{P}}

\newcommand{\suchthat} {\:\: | \:\:}
\newcommand{\ore} {\ \ {\it or} \ \ }
\newcommand{\oand} {\ \ {\it and} \ \ }

\newcommand{\Mod}{\overline{\mathcal M}}     
\newcommand{\uMod}{{\mathcal M}}     
\newcommand{\oM} [1] {\ensuremath{{\mathcal M}_{0,#1}(\R)}}                 
\newcommand{\M} [1] {\ensuremath{{\overline{\mathcal M}}{_{0, #1}(\R)}}}    
\newcommand{\cM} [1] {\ensuremath{{\mathcal M}_{0,#1}}}                     
\newcommand{\CM} [1] {\ensuremath{{\overline{\mathcal M}}{_{0, #1}}}}       

%
%

\theoremstyle{plain}
\newtheorem{thm}{Theorem}
\newtheorem{prop}[thm]{Proposition}
\newtheorem{cor}[thm]{Corollary}
\newtheorem{lem}[thm]{Lemma}
\newtheorem{conj}[thm]{Conjecture}
\newtheorem*{thmhull}{Theorem 11}

\theoremstyle{definition}
\newtheorem*{defn}{Definition}
\newtheorem*{exmp}{Example}

\theoremstyle{remark}
\newtheorem*{rem}{Remark}
\newtheorem*{hnote}{Historical Note}
\newtheorem*{nota}{Notation}
\newtheorem*{ack}{Acknowledgments}
\numberwithin{equation}{section}


\title{Pseudograph associahedra}

\subjclass[2000]{Primary 52B11, Secondary 55P48, 18D50}

\author{Michael Carr}
\address{M.\ Carr: Brandeis University, Waltham, MA 02453}
\email{m.p.carr@gmail.com}

\author{Satyan L.\ Devadoss}
\address{S.\ Devadoss: Williams College, Williamstown, MA 01267}
\email{satyan.devadoss@williams.edu}

\author{Stefan Forcey}
\address{S.\ Forcey: Tennessee State University, Nashville, TN 37209}
\email{sforcey@tnstate.edu}

\begin{abstract}
Given a simple graph $G$, the graph associahedron $\KG$ is a simple polytope whose face poset is based on the connected subgraphs of $G$.   This paper defines and constructs graph associahedra in a general context, for pseudographs with loops and multiple edges, which are also allowed to be disconnected.  We then consider deformations of pseudograph associahedra as their underlying graphs are altered by edge contractions and edge deletions.
\end{abstract}

\keywords{pseudograph, associahedron, tubings}

\maketitle

\baselineskip=17pt

%
%
\section{Introduction}

Given a simple, connected graph $G$, the graph associahedron $\KG$ is a convex polytope whose face poset is based on the connected subgraphs of $G$ \cite{cd}.  For special examples of graphs, the graph associahedra become well-known, sometimes classical polytopes.  For instance, when $G$ is a path, a cycle, or a complete graph, $\KG$ results in the associahedron, cyclohedron, and permutohedron, respectively.  A geometric realization was given in \cite{dev2}.  Figure~\ref{f:kwexmp} shows $\KG$ when $G$ is a path and a cycle with three nodes, resulting in the 2D associahedron and cyclohedron.

\begin{figure}[h]
\includegraphics{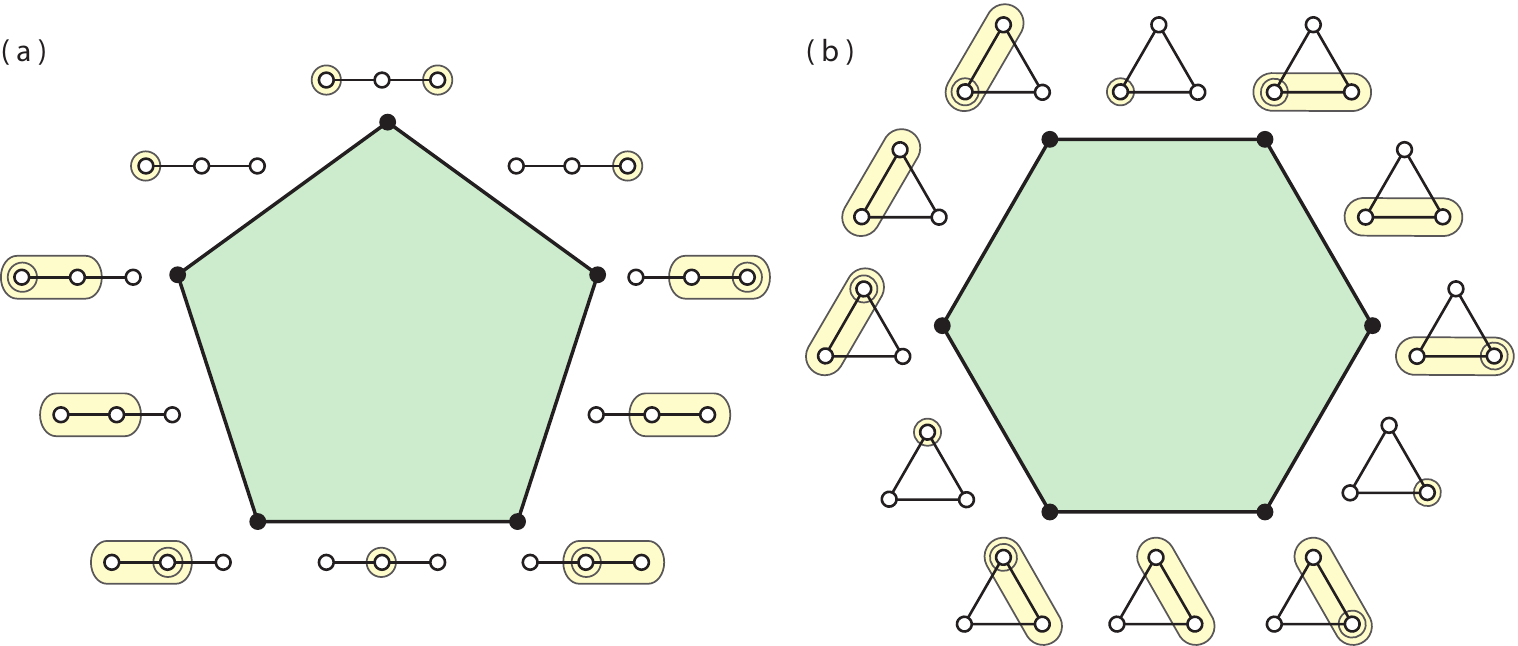}
\caption{Graph associahedra of the (a) path and (b) cycle with three nodes as underlying graphs.}
\label{f:kwexmp}
\end{figure}

This polytope was first motivated by De Concini and Procesi in their work on ``wonderful'' compactifications of hyperplane arrangements \cite{dp}.  In particular, if the hyperplane arrangement is associated to a Coxeter system, the graph associahedron $\KG$ appear as tilings of these spaces, where its underlying graph $G$ is the Coxeter graph of the system  \cite{djs}.  These compactified arrangements are themselves natural generalizations of the Deligne-Knudsen-Mumford compactification \M{n} of the real moduli space of curves \cite{dev1}.
From a combinatorics viewpoint, graph associahedra arise in relation to positive Bergman complexes of oriented matroids \cite{arw} along with studies of their enumerative properties \cite{prw}.  Recently, Bloom has shown graph associahedra arising in results between Seiberg-Witten Floer homology and Heegaard Floer homology \cite{blo}.  Most notably, these polytopes have emerged as graphical tests on ordinal data in biological statistics \cite{mps}.

It is not surprising to see $\KG$ in such a broad range of subjects. Indeed, the combinatorial and geometric structures of these polytopes capture and expose the fundamental concept of connectivity.  Thus far, however, $\KG$ have been studied for only simple graphs $G$.  
The goal of this paper is to define and construct graph associahedra in a general context: finite pseudographs which are allowed to be disconnected, with loops and multiple edges.  Most importantly, this induces a natural map between $\KG$ and $\KG'$, where $G$ and $G'$ are related by either edge contraction or edge deletion.  Such an operation is foundational, for instance, to the Tutte polynomial of a graph $G$, defined recursively using the graphs $G/e$ and $G-e$, which itself specializes to the Jones polynomial of knots. 

An overview of the paper is as follows:  Section~\ref{s:defns} supplies the definitions of the pseudograph associahedra along with several examples.  Section~\ref{s:construct} provides a construction of these polytopes and polytopal cones from iterated truncations of products of simplices and rays.   The connection to edge contractions (Section~\ref{s:contract}) and edge deletions (Section~\ref{s:delete}) are then presented.   A geometric realization is given in Section~\ref{s:real}, used to relate pseudographs with loops to those without.  Finally, proofs of the main theorems are given in Section~\ref{s:proof}.


\begin{ack}
The second author thanks Lior Pachter, Bernd Sturmfels, and the University of California at Berkeley for their hospitality during his 2009-2010 sabbatical where this work was finished.  
\end{ack}

%
%
\section{Definitions} \label{s:defns}
\subsection{}

We begin with foundational definitions.  Although graph associahedra were introduced and defined in \cite{cd}, we start here with a blank slate.  The reader is forewarned that definitions here might not exactly match those from earlier works since previous ones were designed to deal with just the case of simple graphs.

\begin{defn}
Let $G$ be a finite graph with connected components $G_1$, \ldots, $G_k$.
\begin{enumerate}
\item
A \emph{tube} is a proper connected subgraph of $G$ that includes at least one edge between every pair of nodes of $t$ if such edges of $G$ exist.
\item
Two tubes are \emph{compatible} if one properly contains the other, or if they are disjoint and cannot be connected by a single edge of $G$.  
\item
A \emph{tubing} of $G$ is a set of pairwise compatible tubes which cannot contain all of the tubes $G_1$, \ldots, $G_k$.
\end{enumerate}
\end{defn}


\begin{exmp}
The top row of Figure~\ref{f:tubings} shows examples of valid tubings, whereas the bottom row shows invalid ones.  Part (e) fails since one edge between the bottom two nodes must be in the tube.  The tubing in part (f) contains a non-proper tube of $G$.  The two tubes of part (g) fail to be compatible since they can be connected by a single edge of $G$.  And finally, the tubing of part (h) fails since it contains all the tubes of the connected components.
\end{exmp}

\begin{figure}[h]
\includegraphics{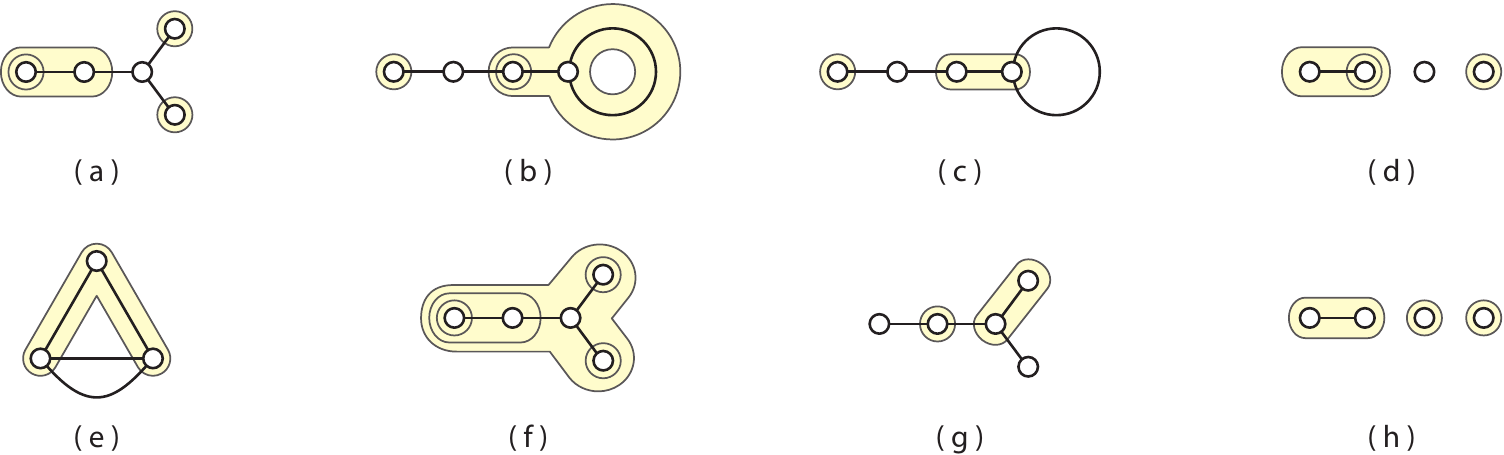}
\caption{The top row shows valid tubings and the bottom rows shows invalid ones.}
\label{f:tubings}
\end{figure}

\subsection{}

Let $\red$ be the number of \emph{redundant edges} of $G$, the minimal number of edges we can remove to get a simple graph.  We now state one of our main theorems.

\begin{thm} \label{t:pseudo}
Let $G$ be a finite graph with $n$ nodes and $\red$ redundant edges.  The \emph{pseudograph associahedron} $\KG$ is of dimension $n-1+\red$ and is either
\begin{enumerate}
\item a simple convex polytope when $G$ has no loops, \ or
\item a simple polytopal cone otherwise.
\end{enumerate}
Its face poset is isomorphic to the set of tubings of $G$, ordered under reverse subset containment.  In particular, the codimension $k$ faces are in bijection with tubings of $G$ containing $k$ tubes.
\end{thm}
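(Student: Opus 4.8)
The plan is to realize $\KG$ concretely as an iterated truncation of an explicit base object $B$, and then to read the dimension, simplicity, and the entire face poset directly off that construction. First I would build $B$ as a product of simplices and rays. Each connected component contributes a simplex on its node set, enlarged by extra simplex factors that record the selections forced by bundles of multiple edges; each loop contributes a ray factor; and the $k$ components are assembled with one further $\Delta^{k-1}$ factor whose facets are the component tubes $G_1,\dots,G_k$, encoding the rule that a tubing may not contain all of them. A product of simple polytopes and rays is itself simple, and it is a bounded polytope exactly when there are no ray factors, i.e.\ no loops, which already produces the dichotomy between cases (1) and (2). I would then check that $\dim B = n-1+\red$ by summing factor dimensions: the node simplices give $\sum_i(n_i-1)=n-k$, the multiple-edge and loop factors together contribute $\red$, and the assembling $\Delta^{k-1}$ supplies the remaining $k-1$.

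Next I would set up the truncation. The facets of $B$ are indexed by a distinguished family of tubes --- the single nodes, the tubes recorded by the multiple-edge simplex factors, and the components $G_i$. Every remaining proper tube $t$ determines a face $F_t$ of $B$ cut out by the base facets associated with the nodes and edge-selections of $t$, and I would truncate these faces in order of increasing dimension. This ordering guarantees that when $F_t$ is cut, every lower-dimensional face requiring truncation is already done, so $F_t$ is still a genuine face of the current object and the cut is transverse. Because truncating a face of a simple polytope or cone preserves both simplicity and dimension, the resulting object $\KG$ is simple of dimension $n-1+\red$, which finishes the geometric part of the statement.

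The combinatorial core is the face-poset isomorphism, which I would establish by induction on the truncation steps, maintaining the invariant that the nonempty faces of the current object are in inclusion-reversing bijection with the tubings built from the tubes available so far, the codimension of a face equalling the number of tubes in the corresponding tubing. The local step rests on a dictionary between compatibility and geometry: cutting $F_t$ introduces a new facet $H_t$ whose intersection with an existing face $F_T$ (indexed by a tubing $T$) is nonempty precisely when $t$ is compatible with every tube of $T$, in which case it is the face indexed by $T\cup\{t\}$. Here nesting of tubes must match containment of faces, while disjoint tubes that cannot be joined by an edge must match facets that still meet, and disjoint tubes that can be joined, or distinct one-edge choices on the same nodes, must match facets that are forced apart (for multiple edges, the parallel, non-adjacent facets of a simplex factor). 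Each truncation raises the codimension of exactly the faces whose tubing gains the tube $t$, so the codimension-$k$/$k$-tube correspondence is preserved, and after all tubes are truncated the bijection is with the full set of tubings.

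I expect the main obstacle to be verifying this compatibility-to-geometry dictionary uniformly across the three pseudograph phenomena simultaneously, together with the bookkeeping that makes the truncation well defined. The delicate points are: that each face $F_t$ is genuinely a face of the current object and has not been destroyed or altered in dimension by an earlier cut; that the non-adjacency clause in the definition of compatibility is exactly the condition separating the facets that should and should not intersect; that bundles of multiple edges, which produce families of pairwise incompatible one-edge tubes, are correctly absorbed into the simplex factors of $B$ rather than created by truncation; and that the ray directions coming from loops interact with the cuts so that the unbounded faces of the cone still match tubings. Once this dictionary is checked and the truncation order is shown to be consistent, the inductive bijection together with the dimension and simplicity claims assemble into the theorem.
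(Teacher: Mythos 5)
Your base polytope and overall strategy are the same as the paper's: form $\base_G = \Delta_{n-1}\times\prod_i\Delta_{b_i-1}\times\ray^{\lambda}$ (with an extra $\Delta_{k-1}$ for disconnected graphs, which the paper handles separately in Theorem~\ref{t:disconnect}), read off dimension and simplicity, and prove the face-poset isomorphism by induction on truncations. The gap is in your truncation scheme. You propose to truncate the faces of the \emph{original} base in increasing order of dimension, asserting that this guarantees each $F_t$ is still a genuine face when its turn comes. For pseudographs both halves of that assertion fail, and not merely as a verification burden: the resulting polytope is wrong. The reason is that containment of tubes no longer reverses containment of faces --- adding edges of a bundle to a tube makes the associated face \emph{larger} (it spans more vertices of $\Delta_{b_i-1}$), while adding nodes makes it smaller. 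So a tube can correspond to a face properly contained in the face of an \emph{incompatible} tube, and truncating the smaller face first leaves the two faces touching, which forces an adjacency between facets of incompatible tubes.

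Concretely, take the paper's 4D example: $G$ the path $1$--$2$--$3$--$4$ with the edge between $3$ and $4$ doubled into a bundle $\{a,b\}$, so $\base_G=\Delta_3\times\Delta_1$. The tube $t=\{2,3,4,e_{23},a\}$ corresponds to the vertex $v_a=(\mathrm{vertex}_{234},a)$, which lies on the square $\mathrm{edge}_{34}\times\Delta_1$ labeled by the full tube $t'=\{3,4,a,b\}$; yet $t$ and $t'$ are incompatible (neither contains the other and they overlap). Under your order, $v_a$ is truncated first (dimension $0$), producing a tetrahedral facet on which the square for $t'$ now has an edge; when that square is later truncated, the facets for $t$ and $t'$ come out adjacent, contradicting the required face poset. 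The paper avoids this by a two-stage procedure (Lemma~\ref{l:simpletrunc} and Theorem~\ref{t:trunc}): first truncate only the \emph{full} tubes in increasing dimension, which destroys faces such as $v_a$ and reassigns their tubes to new faces of $\K G_\US\times\prod_i\Delta_{b_i-1}\times\ray^{\lambda}$; then truncate the remaining tubes in increasing order of the \emph{number of elements} (nodes plus edges) of the tube --- here the $3$-element tube $\{3,4,a\}$ strictly before the $5$-element tube $t$, even though their reassigned faces have equal dimension, precisely so that the earlier cut separates $t$'s face from the facet of $t'$. The paper flags exactly this: neither ``truncate only original faces'' nor ``order by dimension'' survives the passage to pseudographs, and ``it is crucial that the truncations be performed in this order.'' (A smaller point: for a tube containing every node of a component your recipe ``intersect the node facets'' gives the empty face; one must instead take the whole node simplex times a proper face of the bundle factors.) To repair your argument you would need to adopt the two-stage order with the reassignment of defunct tubes, at which point the inductive dictionary you describe is essentially the paper's promotion argument in Section~\ref{s:proof}.
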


\noindent 
The proof of this theorem follows from the construction of pseudograph associahedra from truncations of products of simplices and rays, given by Theorem~\ref{t:trunc}.  The following result allows us to only consider \emph{connected} graphs $G$:

\begin{thm} \label{t:disconnect}
Let $G$ be a disconnected pseduograph with connected components $G_1, G_2, \ldots, G_k$.  Then $\KG$ is isomorphic to $\KG_1 \times \KG_2 \times \cdots \times \KG_k \times \Delta_{k-1}.$
\end{thm}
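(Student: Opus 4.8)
The plan is to prove the statement at the level of face posets, invoking Theorem~\ref{t:pseudo} to identify the faces of each associahedron with the tubings of the corresponding graph (ordered by reverse containment), together with the standard fact that the faces of a product of polytopes are the products of the faces of the factors. The whole argument rests on one structural observation: since every tube is by definition a \emph{connected} subgraph, each tube of $G$ lies entirely inside a single component $G_i$. Consequently the tubes of $G$ split into those lying properly inside some $G_i$ --- which are exactly the tubes of $G_i$ --- together with the full components $G_1,\dots,G_k$ themselves. Each $G_i$ is a proper connected subgraph of the disconnected graph $G$, and is therefore a genuine tube of $G$, even though it is not a tube of $G_i$.

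Next I would analyze compatibility. Two tubes sitting in distinct components are disjoint and, since $G$ has no edge joining different components, cannot be connected by a single edge; hence such a pair is automatically compatible. A full component $G_i$ properly contains every tube of $G_i$, and is disjoint from (and cannot be joined to) every tube lying in another component, so it too is compatible with every other tube. Thus the only genuine compatibility constraints inside a collection of tubes are the within-component ones, and these agree with compatibility in $G_i$, since $G_i$ inherits precisely the edges of $G$ on its node set. Writing a tubing $T$ of $G$ as $T = T_1 \sqcup \cdots \sqcup T_k \sqcup S$, where $T_i$ collects the tubes lying properly inside $G_i$ and $S \subseteq \{G_1,\dots,G_k\}$ collects the full components, pairwise compatibility of $T$ is equivalent to each $T_i$ being a tubing of $G_i$ (the tubing condition for a connected graph being vacuous, as its single component is not a proper subgraph). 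The one global constraint --- that $T$ not contain all of $G_1,\dots,G_k$ --- becomes exactly $S \subsetneq \{G_1,\dots,G_k\}$.

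This yields the bijection
$$ T \longmapsto (T_1,\dots,T_k,\,S), \qquad T_i \text{ a tubing of } G_i, \ \ S \subsetneq \{G_1,\dots,G_k\}. $$
To finish I would match this with the face poset of the product. A face of $\KG_1 \times \cdots \times \KG_k \times \Delta_{k-1}$ is a tuple $(T_1,\dots,T_k,f)$ consisting of tubings of the $G_i$ together with a nonempty face $f$ of the simplex $\Delta_{k-1}$, and the nonempty faces of $\Delta_{k-1}$ correspond to the nonempty subsets of its $k$ vertices. The crucial point is to send $S$ to the face spanned by the \emph{complementary} vertex set $\{\,i : G_i \notin S\,\}$, which is nonempty exactly because $S$ is a proper subset. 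This complementation is what reverses inclusion correctly: enlarging $S$ (a smaller face of $\KG$) deletes a vertex from the spanning set (a smaller face of $\Delta_{k-1}$), so the map respects the reverse-containment order on every factor and is a poset isomorphism. As a consistency check, the dimensions add up: if $G_i$ has $n_i$ nodes and $r_i$ redundant edges, then $\sum_i(n_i-1+r_i)+(k-1)=n+r-1$, matching $\dim\KG$ from Theorem~\ref{t:pseudo}.

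I expect the main obstacle to be precisely this last bookkeeping with the simplex factor: recognizing that the $2^k-1$ proper subsets $S$ index exactly the $2^k-1$ nonempty faces of $\Delta_{k-1}$, and that the correct matching is by complementation so that the poset orientation is preserved. The tube classification and the cross-component compatibility, by contrast, are immediate from connectedness and the absence of inter-component edges. If a geometric rather than purely combinatorial isomorphism is desired, one can instead run the same decomposition through the truncation construction of Theorem~\ref{t:trunc}, where the product of the component polytopes (or cones) with the simplex arises directly.
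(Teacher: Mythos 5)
Your proof is correct and follows essentially the same route as the paper: decompose a tubing of $G$ into tubings of the components together with the choice of which full components $G_i$ to include, and identify that choice with a face of $\Delta_{k-1}$. The paper simply cites $\mathcal{K}H_k \cong \Delta_{k-1}$ for the edgeless graph $H_k$ at the point where you verify the subset-to-simplex-face correspondence by hand.
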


\begin{proof}
Any tubing of $G$ can be described as:
\begin{enumerate}
\item a listing of tubings $T_1 \in \KG_1, \ T_2 \in \KG_2, \ \ldots, \ T_k \in \KG_k$, \ and
\item for each component $G_i$ either including or excluding the tube $T_i = G_i$. 
\end{enumerate}
The second part of this description is clearly isomorphic to a tubing of the edgeless graph $H_k$ on $k$ nodes.  But from \cite[Section 3]{dev2}, since $\K H_k$ is the simplex $\Delta_{k-1}$, we are done.
\end{proof}

We now pause to illustrate several examples.

\begin{exmp}
We begin with the 1D cases.  Figure~\ref{f:1D-exmp}(a) shows the pseudograph associahedron of a path with two nodes.  The polytope is an interval, seen as the classical 1D associahedron.  Here, the interior of the interval, the maximal element in the poset structure, is labeled with the graph with no tubes.  Part (b) of the figure shows $\KG$ as a ray when $G$ is a loop.  Note that we cannot have the entire loop as a tube since all tubes must be proper subgraphs.
\end{exmp}

\begin{figure}[h]
\includegraphics{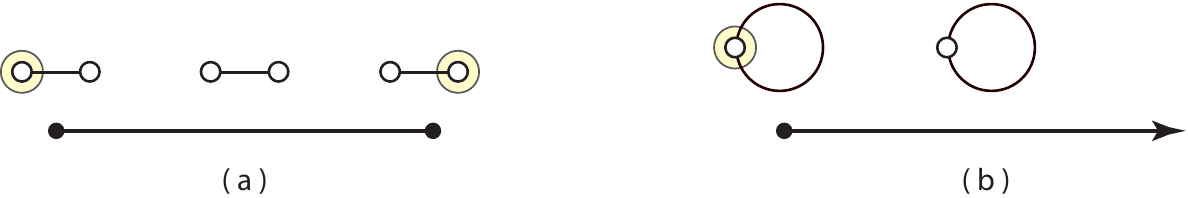}
\caption{Two 1D examples.}
\label{f:1D-exmp}
\end{figure}

\begin{exmp}
For some 2D cases, Figure~\ref{f:kwexmp} displays $\KG$ for a path and a cycle with three nodes as underlying graphs.  Figure~\ref{f:2D-exmp}(a) shows the simplest example of $\KG$ for a graph with a multiedge, resulting in a square.  The vertices of the square are labeled with tubings with two tubes, the edges with tubings with one tube, and the interior with no tubes.  Figure~\ref{f:2D-exmp}(b) shows $\KG$, for $G$ an edge with a loop, as a polygonal cone, with three vertices, two edges, and two rays. We will explore this figure below in further detail.
\end{exmp}

\begin{figure}[h]
\includegraphics{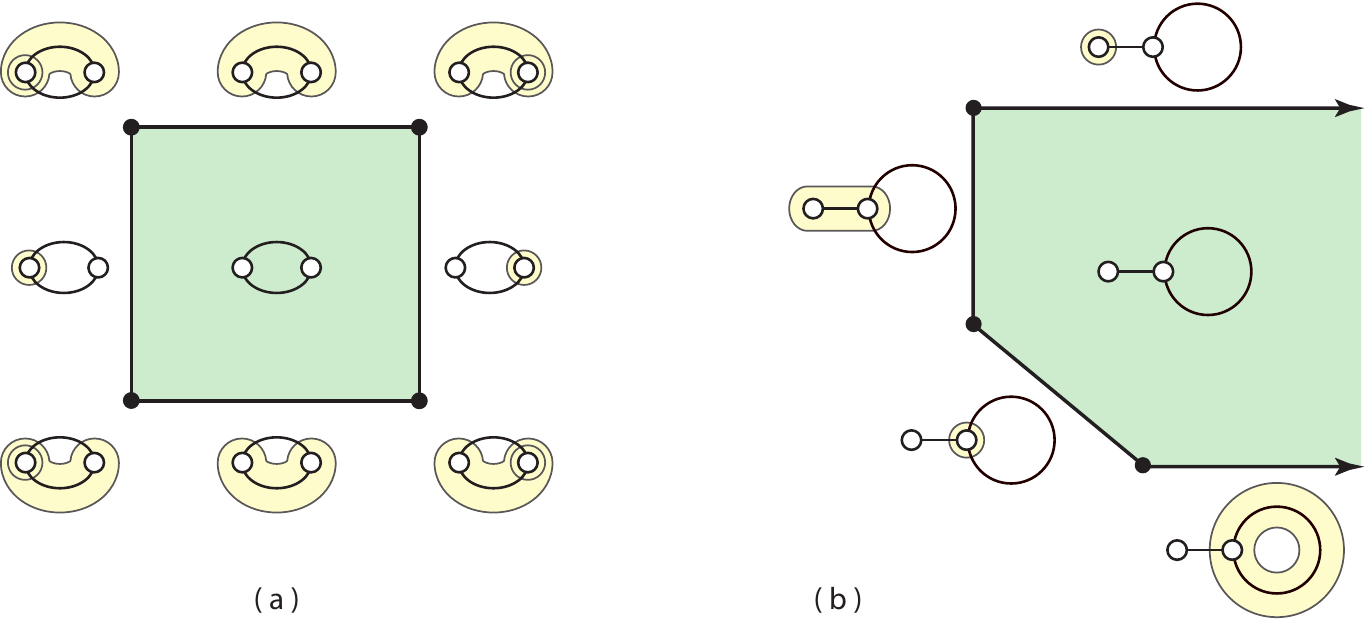}
\caption{Two 2D examples.}
\label{f:2D-exmp}
\end{figure}

\begin{exmp}
Three examples of 3D pseudograph associahedra are given in Figure~\ref{f:3D-exmp}.  Since each of the corresponding graphs have 3 nodes and one multiedge, the dimension of the polytope is three, as given in Theorem~\ref{t:pseudo}.  Theorem~\ref{t:disconnect} shows part (a) as the product of an interval (having two components) with the square from Figure~\ref{f:2D-exmp}(a), resulting in a cube.  The polyhedra in parts (b) and (c) can be obtained from iterated truncations of the triangular prism. Section~\ref{s:construct} brings these constructions to light.
\end{exmp}

\begin{figure}[h]
\includegraphics{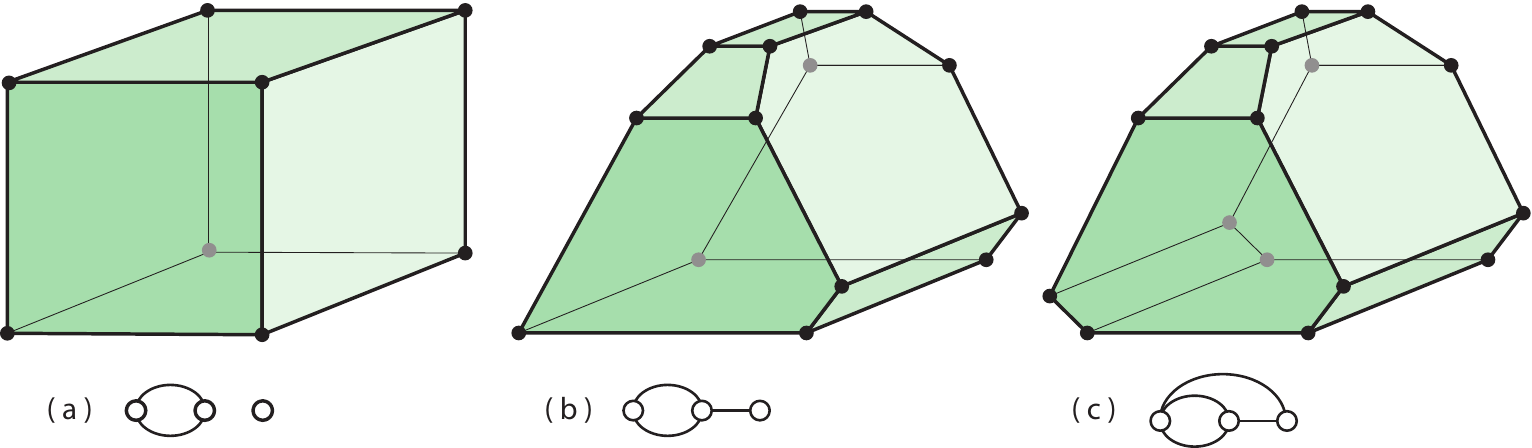}
\caption{Three 3D examples.}
\label{f:3D-exmp}
\end{figure}

\subsection{}

We close this section with an elegant relationship between permutohedra and two of the simplest forms of pseudographs. 

\begin{defn}
The \emph{permutohedron} $\per_n$ is an $(n-1)$-dimensional polytope whose faces are in bijection with the strict weak orderings on $n$ letters.  In particular, the $n!$ vertices of $\per_n$ correspond to all permutations of $n$ letters.
\end{defn}

\noindent
The two-dimensional permutohedron $\per_3$ is the hexagon and the polyhedron $\per_4$ is depicted in Figure~\ref{f:tonks-facet}(a).  It was shown in \cite[Section 3]{dev2} that if  $\Gamma_n$ is a complete graph of $n$ nodes, then $\K \Gamma_n$ becomes $\per_n$.


\begin{prop} \label{p:permuto}
Consider the simplest forms of pseudographs $G$:
\begin{enumerate}
\item If $G$ has two nodes and $n$ edges between them, then $\KG$ is isomorphic to $\per_n \times \Delta_1$.
\item If $G$ has one node and $n$ loops, then $\KG$ is isomorphic to $\per_n \times \ray$, where $\ray$ is a ray.
\end{enumerate}
\end{prop}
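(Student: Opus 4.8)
The plan is to prove both parts through their face posets: by Theorem~\ref{t:pseudo} the face poset of a pseudograph associahedron is its poset of tubings under reverse containment, so it suffices to exhibit a poset isomorphism between the tubings of $G$ and the face poset of the claimed product. I would lean on three facts already available: $\per_n = \K\Gamma_n$ for $\Gamma_n$ the complete graph on $n$ nodes; $\Delta_1 = \K H_2$ for the edgeless graph $H_2$ on two nodes (the $k=2$ instance of \cite[Section 3]{dev2}); and $\ray = \K(\text{single loop})$ as in Figure~\ref{f:1D-exmp}(b). Since the faces of a product of polytopes, or of a polytope and a ray, are exactly the products of their faces, the whole task reduces to splitting each tubing of $G$ into a tubing of $\Gamma_n$ together with a tubing of the small second factor.

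For part (1), let $a,b$ be the two nodes and $e_1,\dots,e_n$ the parallel edges. The tubes of $G$ come in two kinds: the single-node tubes $\{a\},\{b\}$, and, for each nonempty proper subset $S\subsetneq\{e_1,\dots,e_n\}$, the tube $t_S$ carrying both nodes and exactly the edges of $S$ (nonempty because a tube must contain an edge joining $a$ and $b$, proper because $S$ cannot be the full edge set). Any two tubes $t_S,t_{S'}$ share both nodes, so the disjointness clause never applies and they are compatible precisely when nested. Identifying each $e_i$ with a node $v_i$ of $\Gamma_n$ carries $t_S$ to the $\Gamma_n$-tube on vertex set $S$, and this matches compatibility exactly, since in $\Gamma_n$ two tubes are likewise compatible iff nested (disjoint vertex sets are always joinable by an edge). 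Each single-node tube lies inside every $t_S$ and so is compatible with all of them, while $\{a\}$ and $\{b\}$ are joinable by an edge and hence incompatible. Thus a tubing of $G$ is uniquely a chain $\{t_S : S\in\mathcal C\}$, i.e.\ a tubing of $\Gamma_n$, together with a choice of at most one of $\{a\},\{b\}$, which is exactly a tubing of $H_2$; this preserves containment coordinatewise and gives the isomorphism onto the face poset of $\per_n\times\Delta_1$.

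For part (2), let $v$ be the single node and $\ell_1,\dots,\ell_n$ the loops. Because there are no pairs of distinct nodes, the edge condition is vacuous, so the tubes are precisely the subgraphs $u_L$ consisting of $v$ and a proper subset $L\subsetneq\{\ell_1,\dots,\ell_n\}$ of loops, including $L=\emptyset$ (the bare node). All tubes contain $v$, so no two are disjoint and compatibility is again nesting of loop sets. Identifying $\ell_i$ with $v_i$ sends $u_L$ with $L\neq\emptyset$ to the $\Gamma_n$-tube on $L$, matching compatibility as before, while $u_\emptyset$ sits inside every other tube. Hence a tubing of $G$ is a chain of proper subsets of $\{\ell_1,\dots,\ell_n\}$, which splits uniquely into its nonempty members (a tubing of $\Gamma_n$) and the binary datum of whether $\emptyset$ belongs to the chain; the latter is exactly a tubing of the single loop, whose associahedron is the ray, yielding the isomorphism onto the face poset of $\per_n\times\ray$.

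The main obstacle, and the step I would check most carefully, is the difference between the two second factors and the verification that the splitting really produces a \emph{direct} product. In (1) there are two single-node tubes that are mutually incompatible, so the extra datum is ``$\{a\}$, $\{b\}$, or neither,'' the three faces of the interval $\Delta_1=\K H_2$; in (2) there is a single optional tube $u_\emptyset$ lying below every chain, so the extra datum is merely ``present or not,'' the two faces of the ray. One must confirm that these extra tubes are compatible with every tube of the $\Gamma_n$-factor, which they are by containment, so that no cross-compatibility condition obstructs the decomposition. Granting this, Theorem~\ref{t:pseudo} finishes both parts, the loops of part (2) accounting for the polytopal-cone factor $\ray$ exactly as that theorem predicts.
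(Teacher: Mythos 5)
Your proposal is correct and follows essentially the same route as the paper: both identify the parallel edges (resp.\ loops) with the nodes of $\Gamma_n$ so that the edge-carrying tubes become tubings of $\K\Gamma_n=\per_n$, and both peel off the single-node tubes as the second factor ($\Delta_1$ in case (1), the ray in case (2)). The only cosmetic difference is that you realize the interval as $\K H_2$ for the edgeless graph while the paper uses $\K\Gamma_2$; your write-up simply makes explicit the compatibility checks the paper leaves to the reader.
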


\begin{proof}
Consider case (1):  We view $\per_n$ as $\K\Gamma_n$ for the complete graph on $n$ nodes $\{v_1,\dots v_n\}$, and the interval $\Delta_1$ as $\K\Gamma_2$ for the complete graph on two nodes $\{b_1, b_2\}$.
Let the nodes of $G$ be $\{a_1, a_2\}$ and its edges $\{e_1, \ldots, e_n\}$.  
We construct an isomorphism $\KG \to \K\Gamma_n \times \K\Gamma_2$ where a tube $G_t$ of $G$ maps to the tube 
$(\psi_1(t), \psi_2(t))$, where $\psi_1(t)$ is the connected subgraph of $\Gamma_n$ induced by the node set $\{v_i \suchthat e_i \in G_t\}$, and $\psi_2(t)$ is the node $\{b_i \suchthat a_i = G_t \}$.
This proves the first result; the proof of case (2) is similar, replacing the two nodes of $G$ with one node.
\end{proof}

\begin{exmp}
Figure~\ref{f:3D-permuto-flat}(a) shows a hexagonal prism, viewed as $\per_3 \times \Delta_1$.  It is the pseudograph associahedron of the graph with two nodes and three connecting edges.  Part (b) shows a 2D projection of $\per_3 \times \ray$, the hexagonal cone of a graph with three loops.  Indeed, as we will see later, the removal of a hexagonal facet in (a) yields the object in (b).
\end{exmp}

\begin{figure}[h]
\includegraphics{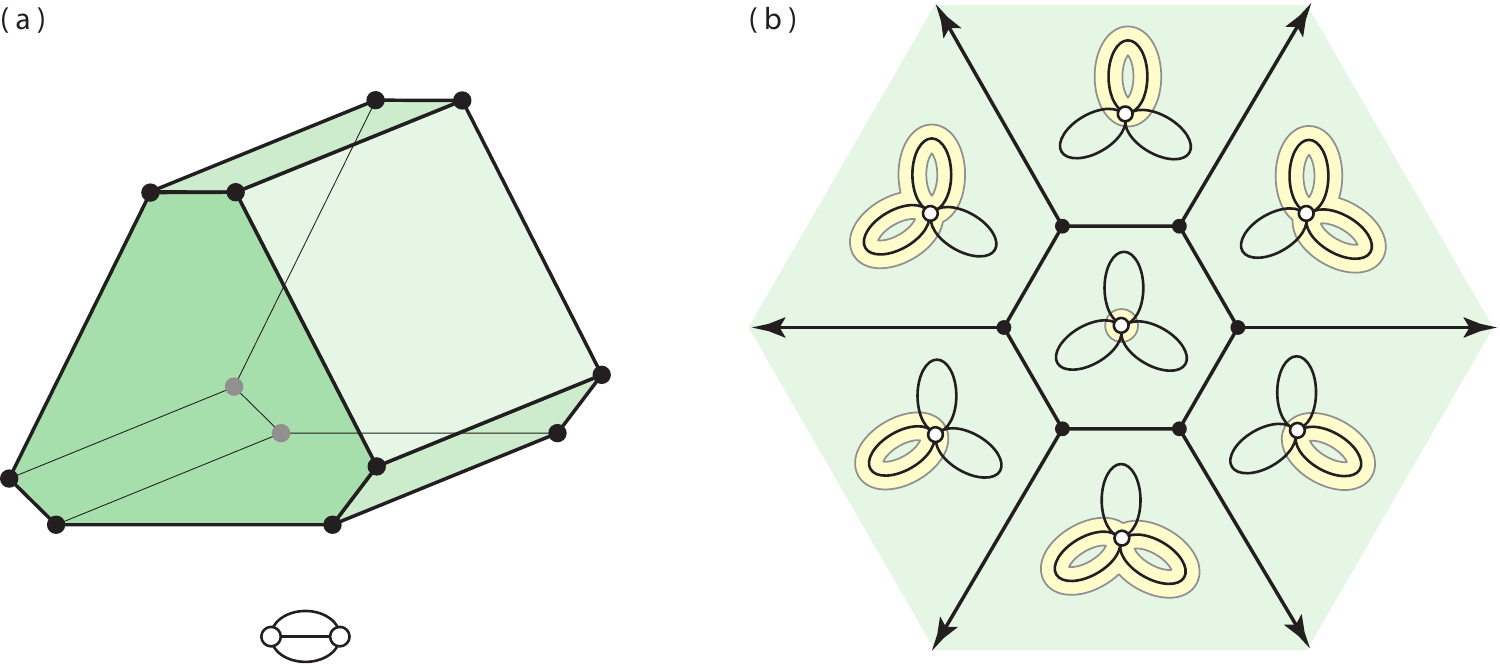}
\caption{(a) The hexagonal prism $\per_3 \times \Delta_1$ and (b) the planar projection of  $\per_3 \times \ray$.}
\label{f:3D-permuto-flat}
\end{figure}

%
%
\section{Constructions} \label{s:construct}
\subsection{}

There exists a natural construction of graph associahedra from iterated truncations of the simplex:  For a connected, simple graph $G$ with $n$ nodes, let $\basec_G$ be the $(n{-}1)$-simplex $\Delta_{n-1}$ in which each facet (codimension one face) corresponds to a particular node.  Thus each proper subset of nodes of $G$ corresponds to a unique face of $\basec_G$ defined by the intersection of the faces associated to those nodes.  Label each face of $\basec_G$ with the subgraph of $G$ induced by the subset of nodes associated to it.

\begin{thm} \cite[Section 2]{cd} \label{t:graphasstrunc}
For a connected, simple graph $G$, truncating faces of $\Delta_G$ labeled by tubes, in increasing order of dimension, results in the graph associahedron $\KG$.
\end{thm}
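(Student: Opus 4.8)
The plan is to build a dimension-reversing bijection between the faces of the object produced by the truncation procedure and the tubings of $G$, and to verify it inductively through the successive cuts. I would begin by recording the face structure of the starting simplex $\basec_G$: its proper faces are indexed by the nonempty proper node subsets $T$, the face indexed by $T$ has codimension $|T|$, and it is labeled by the induced subgraph $G[T]$. Because $G[T]$ contains \emph{every} $G$-edge between its nodes, its connected components are pairwise disjoint and pairwise nonadjacent, hence form a tubing with no properly nested pair; conversely any such ``flat'' tubing $\{t_1,\dots,t_m\}$ arises from exactly one $T=\bigcup t_i$. This settles the correspondence on the part of the poset present before any cutting, and it is the base of the induction.

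Next I would induct on the tube-faces, truncated in increasing order of dimension, using the local truncation law for a simple polytope $P$: cutting off a face $F$ leaves the result $P'$ simple, creates exactly one new facet $E_F$, and the faces of $P'$ are the strict transforms of faces not containing $F$ together with, for each face $H\supsetneq F$, a smaller face lying on $E_F$; moreover a prior facet $H$ of $P$ meets $E_F$ iff $F\subseteq H$, and two old facets whose intersection lay in the cut-off region separate in $P'$. I would declare $E_F$ to be the facet carrying the tube $t$ that labels $F$, so that a face realized as a transverse intersection of facets $E_{t_1},\dots,E_{t_k}$ (allowing the original simplex facets as the tubes with $|t_i|=1$) is assigned the set $\{t_1,\dots,t_k\}$. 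To drive the induction I would also record the recursive identification of the new facet $E_t$ with a product of graph associahedra, one on the tube and one on its reconnected complement, which both explains the internal faces of $E_t$ and supplies the inductive hypothesis for faces lying on it.

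The crux is to prove intersection $\Leftrightarrow$ compatibility: every surviving nonempty intersection names a tubing, and every tubing is realized. The transparent prototype is a pair of single nodes $v,w$: their ridge in $\basec_G$ is labeled $G[\{v,w\}]$, which is a tube (so it gets truncated, separating $H_v$ from $H_w$) precisely when $v,w$ are adjacent, and is two isolated nodes (never truncated, so $H_v,H_w$ persist) precisely when they are nonadjacent — exactly the compatibility rule. The main obstacle is to upgrade this prototype to all tubes while guaranteeing the truncation sequence is well defined and order independent within each dimension. I must check that each tube-face is still a genuine face of the current partially truncated polytope at the instant it is cut — which holds because increasing-dimension order removes only faces contained in already-cut smaller tubes — and, most delicately, that any two disjoint but adjacent tubes always end up meeting only along a face that is itself cut off, so their facets separate, whereas nested or disjoint-nonadjacent tubes retain a transverse intersection. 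Establishing this separation-versus-persistence dichotomy uniformly, rather than the routine bookkeeping of surviving faces, is where the real content lies; the simplicity of the polytope at every stage, inherited from the simplex, is what keeps the local truncation picture valid throughout.
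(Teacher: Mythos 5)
The paper gives no proof of this statement (it is quoted from \cite[Section~2]{cd}); the closest internal argument is the proof of its pseudograph generalization, Theorem~\ref{t:trunc}, in Section~\ref{s:proof}. Your proposal takes essentially that same route---induct over the truncations in increasing order of dimension, identify each new facet with the tube labeling the face that was cut, and reduce everything to the criterion that a tube is compatible with the tubes already realized exactly when the corresponding faces properly intersect or contain the face being truncated---and your explicit deferral of the uniform ``separation versus persistence'' verification is at the same level of detail as the paper's own ``leaving the computations of intersections to the reader.''
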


Figure~\ref{f:d4} provides an example of this construction.  It is worth noting two important features of this truncation.  First, only certain faces of the \emph{original} base simplex $\basec_G$ are truncated, not any new faces which appear after subsequent truncations.  And second, the \emph{order} in which the truncations are performed follow a De Concini - Procesi framework \cite{dp}, where all the dimension $k$ faces are truncated before truncating any $(k+1)$-dimensional faces.

\begin{figure}[h]
\includegraphics{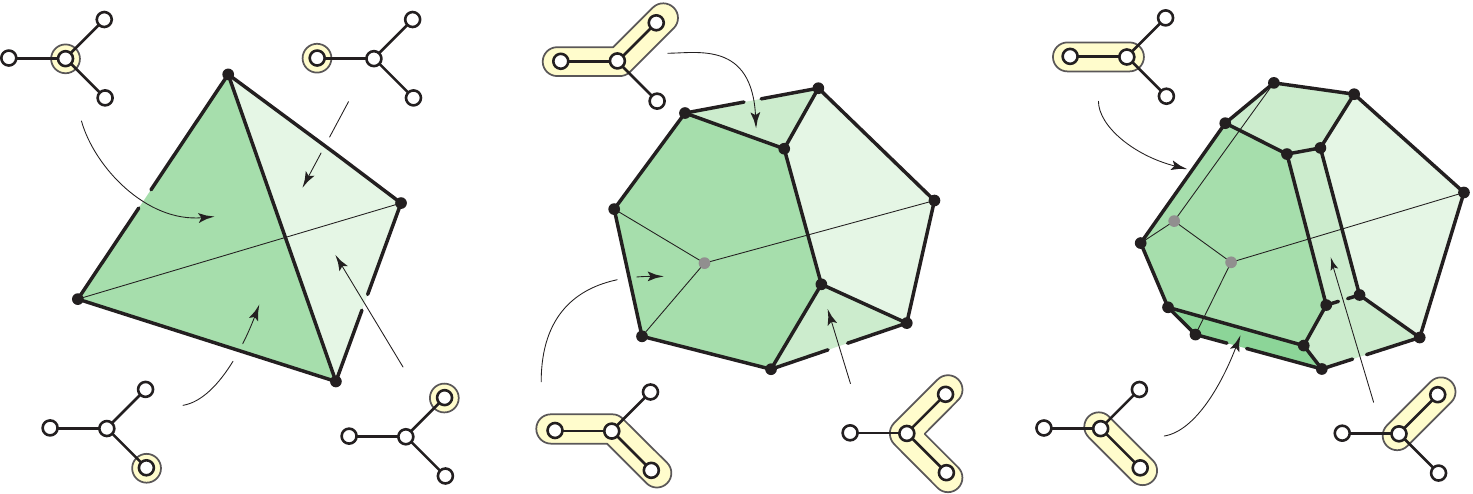}
\caption{An iterated truncation of the simplex resulting in a graph associahedron.}
\label{f:d4}
\end{figure}

\subsection{}

We construct the pseudograph associahedron by a similar series of truncations to a base polytope.  However the truncation procedure is a delicate one, where neither feature described above succeed here.

\begin{defn}
Let $G$ be a pseudograph with $n$ nodes.  Two (non-loop) edges of $G$ are in a \emph{bundle} if and only if they have the same pair of endpoints.  Let $G_\US$ be the \emph{underlying simple graph} of $G$, created by deleting all the loops and replacing each bundle with a single edge.\footnote{This graph is uniquely defined up to graph isomorphism.}  Figure~\ref{f:bundles}(a) shows an example of a pseudograph with 10 bundles and 4 loops, whereas part (b) shows its underlying simple graph.
\end{defn}

\begin{figure}[h]
\includegraphics{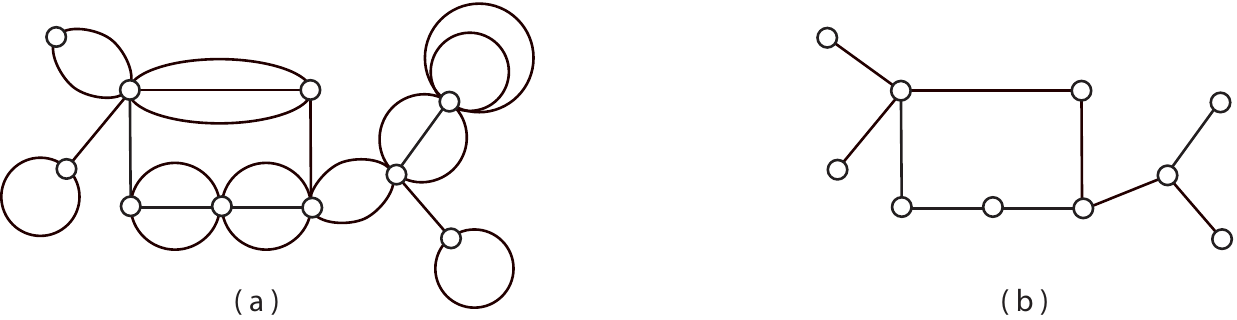}
\caption{(a) Pseudograph and (b) its underlying simple graph.}
\label{f:bundles}
\end{figure}

Let $\{B_1, \ldots, B_k\}$ be the set of bundles of edges of $G$, and denote $b_i$ as the number of edges of bundle $B_i$, and $\lambda$ as the number of loops of $G$.
Define $\base_G$ as the product 
$$\Delta_{n-1} \ \times \ \prod_{B_i \in G} \, \Delta_{b_i -1} \ \times \ \ray^{\lambda}$$
of simplices and rays endowed with the following labeling on its faces:
\begin{enumerate}
\item
Each \emph{facet} of the simplex $\Delta_{n-1}$ is labeled with a particular node of $G$, and each face of $\Delta_{n-1}$ corresponds to a proper subset of nodes of $G$, defined by the intersection of the facets associated to those nodes.
\item
Each \emph{vertex} of the simplex $\Delta_{b_i - 1}$ is labeled with a particular edge of bundle $B_i$, and each face of $\Delta_{b_i - 1}$ corresponds to a subset of edges of $B_i$ defined by the vertices spanning the face.
\item
Each \emph{ray} $\ray$ is labeled with a particular loop of $G$.
\item
These labelings naturally induce a labeling on $\base_G$.
\end{enumerate}

The construction of \emph{graph} associahedra from truncations of the simplex involved only a labeling associated to the nodes of our underlying graph.  Thus tubes of the graph are immediate, based on connected subgraphs containing certain nodes.  The construction of \emph{pseudograph} associahedra, however, involves the complexity of issues relating both the nodes and the edges.  This leads not only to a subtle choosing of the faces of $\base_G$ to truncate, but a delicate ordering of the truncation of the faces.

We begin by marking the faces of $\base_G$ which will be of interest in the truncation process:  To each tube $G_t$ of the labeled pseudograph $G$, associate a labeling $S$ of nodes and edges of $G$ such that
\begin{enumerate}
\item all nodes of $G_t$ are in $S$,
\item all edges of $G_t$ are in $S$, 
\item all bundles of $G$ not containing edges of $G_t$ are in $S$, \and
\item all loops not incident to any node of $G_t$ are in $S$.
\end{enumerate}

\begin{defn}
A tube $G_t$ is \emph{full} if it is a collection of bundles of $G$ which contains all the loops of $G$ incident to the nodes of $G_t$.  In other words, $G_t$ is an induced subgraph of $G$.
\end{defn}

\noindent
Figure~\ref{f:label} shows examples of tubes of a graph $G$ and their associated labeling $S$.  The two tubes on the top row are full, whereas the bottom four tubes are not.

\begin{figure}[h]
\includegraphics{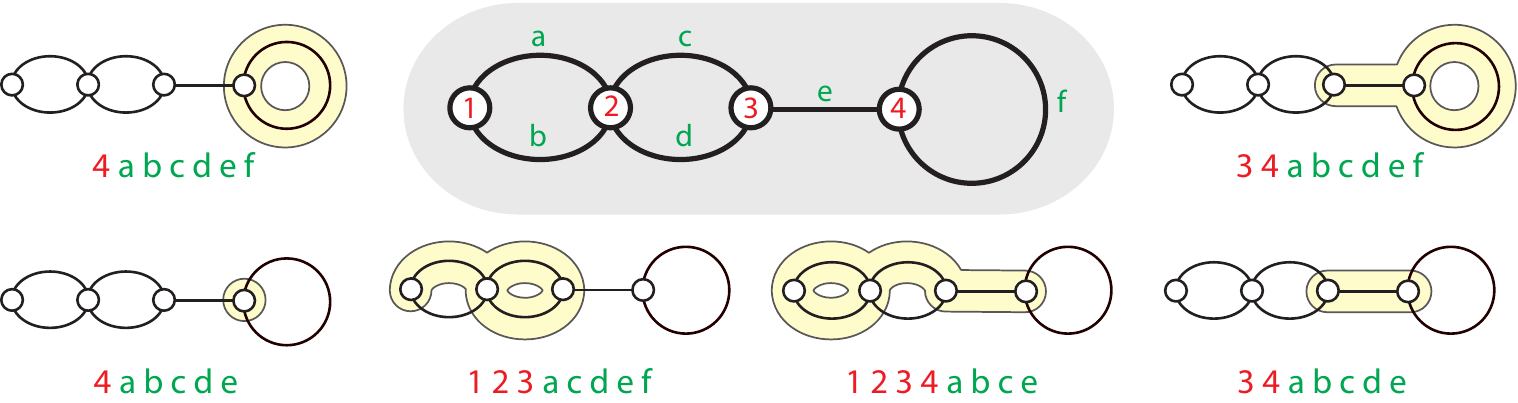}
\caption{Tubes and their corresponding labels in $\base_G$.}
\label{f:label}
\end{figure}

\subsection{}

We can now state our construction of $\KG$ from truncations, broken down into two steps:

\begin{lem} \label{l:simpletrunc}
Let $G$ be a connected pseudograph.   
Truncating the faces of $\base_G$ labeled with full tubes, in increasing order of dimension, constructs
\begin{equation} \label{e:midtrunc}
\K G_\US \ \times \ \prod_{B_i \in G} \triangle_{b_i-1}  \ \times \ \ray^{\lambda} \, .
\end{equation}
\end{lem}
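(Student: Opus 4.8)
The plan is to reduce the construction to the simple-graph case of Theorem~\ref{t:graphasstrunc} by showing that the faces of $\base_G$ labeled by full tubes are \emph{cylindrical}: they are the whole factor on every piece of the product except $\Delta_{n-1}$, where they cut out exactly the faces that build $\K G_\US$. First I would record the bijection between full tubes of $G$ and tubes of $G_\US$. A full tube is a proper, connected, induced subgraph of $G$; sending it to its node set $T$ gives a proper node subset inducing a connected subgraph of $G_\US$, i.e.\ a tube of $G_\US$, since for simple graphs the tubes are precisely the proper connected induced subgraphs. Conversely a tube $T$ of $G_\US$ lifts to $G[T]$, which is a full tube. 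Connectivity and properness correspond in both directions (equivalently, $T\neq V(G)$), because deleting loops and collapsing bundles changes neither.

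Next I would identify the face of $\base_G$ attached to a full tube $G_t$ with node set $T$. In the $\Delta_{n-1}$ factor the labeling $S$ selects exactly the face $F_T=\bigcap_{v\in T}H_v$ for the proper subset $T$, of codimension $|T|$, since conditions (2)--(4) add no further nodes to $S$. I claim every other factor is the full factor. For a bundle $B_i$ with endpoints $u,v$: if $\{u,v\}\subseteq T$ then, since $G_t$ is induced, all edges of $B_i$ lie in $G_t$ and hence all vertices of $\Delta_{b_i-1}$ lie in $S$ by (2); if $\{u,v\}\not\subseteq T$ then $B_i$ contains no edge of $G_t$, so the whole bundle lies in $S$ by (3). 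Either way the $\Delta_{b_i-1}$ factor is the full simplex. For a loop $\ell$ at $w$: if $w\in T$ then $\ell\in E(G_t)$ because $G_t$ is full, so $\ell\in S$ as an edge of $G_t$ by (2); if $w\notin T$ then $\ell$ is not incident to $G_t$, so $\ell\in S$ by (4). Hence every loop lies in $S$ and every ray factor is the full ray. The face attached to $G_t$ is therefore the product $F_T\times Z$, where $Z := \prod_{B_i}\triangle_{b_i-1}\times\ray^\lambda$ collects all factors other than $\Delta_{n-1}$.

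With the cylindrical form in hand I would finish with a product-truncation lemma: if $F_A$ is a face of a polyhedron $A$, then truncating $A\times Z$ at $F_A\times Z$ equals $(\text{truncate}(A,F_A))\times Z$. This holds because one may truncate using an affine functional pulled back from $A$, so the cut is constant in the $Z$-directions and the removed neighborhood is exactly $N(F_A)\times Z$. Since $Z$ is fixed across all full tubes and $\dim(F_T\times Z)=\dim F_T+\dim Z$, ordering the full-tube faces by dimension agrees with ordering the faces $F_T\subseteq\Delta_{n-1}$ by dimension. Truncating $\base_G=\Delta_{n-1}\times Z$ at the faces $F_T\times Z$ in increasing order of dimension thus equals $\big(\text{truncate } \Delta_{n-1}\text{ at the }F_T\text{ in increasing order}\big)\times Z$; applying Theorem~\ref{t:graphasstrunc} to $G_\US$ with $\basec_{G_\US}=\Delta_{n-1}$, the bracketed polytope is $\K G_\US$, yielding \eqref{e:midtrunc}.

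The main obstacle is the middle step -- verifying that the full-tube faces really are full on the bundle and ray factors. The ray case is the delicate one: it depends on reading the conditions so that a loop belonging to a full tube is recorded in $S$ by (2) (loops being edges), while (4) supplies only the loops away from the tube; were an incident loop of a full tube omitted from $S$, its ray would be cut and the clean product $F_T\times Z$ would fail. A secondary point to handle with care is the product-truncation lemma in the presence of the unbounded factors $\ray^\lambda$, together with the check that each $F_T$ survives (is not itself truncated away) through the iteration -- precisely the De Concini--Procesi ordering already guaranteed for $G_\US$ by Theorem~\ref{t:graphasstrunc}.
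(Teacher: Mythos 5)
Your proposal is correct and follows essentially the same route as the paper's (much terser) proof: the paper likewise observes that a full tube's label restricts to the top face of each $\Delta_{b_i-1}$ (and each ray), so the truncations act only on the $\Delta_{n-1}$ factor and Theorem~\ref{t:graphasstrunc} applies. Your version simply makes explicit the full-tube/$G_\US$-tube bijection, the cylindricality check on the loop factors, and the product-truncation lemma that the paper leaves implicit.
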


\begin{proof}
A full tube consisting only of bundles maps to the $(b_i-1)$-face of $\Delta_{b_i-1}$.  Thus truncating these faces has a trivial effect on that portion of the product.  The result then follows immediately from Theorem~\ref{t:graphasstrunc}.
\end{proof}

As each face $f$ of $\base_G$ is truncated, those subfaces of $f$ that correspond to tubes but have not yet been truncated are removed.  
It is natural, however, to assign these defunct tubes to the combinatorial images of their original subfaces.  
Denote $\tbase_G$ as the truncated polytope of~\eqref{e:midtrunc}.

\begin{thm} \label{t:trunc}
Truncating the remaining faces of $\tbase_G$ labeled with tubes, in increasing order of the number of elements in each tube, results in the pseudograph associahedron $\KG$ polytope.
\end{thm}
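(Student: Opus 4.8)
The plan is to show that the two-step truncation produces a simple polytope (or cone) of dimension $n-1+\red$ whose facets biject with the tubes of $G$ and whose faces biject, under reverse containment, with the tubings of $G$; this is precisely the face poset asserted in Theorem~\ref{t:pseudo}. Because every truncation preserves simplicity and dimension, and because $\dim \tbase_G = (n-1)+\sum_i(b_i-1)+\lambda = n-1+\red$ (the redundant edges being the $b_i-1$ superfluous edges of each bundle together with the $\lambda$ loops), only the face poset needs tracking. By Lemma~\ref{l:simpletrunc} together with Theorem~\ref{t:graphasstrunc}, the facets of $\tbase_G$ already biject with the \emph{full} tubes of $G$ (carried by the facets of the $\K G_\US$ factor) together with the passive factors coming from the bundle simplices and loop rays. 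I would first fix, for each tube $G_t$, the face $f_t$ of $\tbase_G$ cut out by its label $S$: in the $\K G_\US$ factor it is the face labeled by the image of $G_t$ in $G_\US$ (a tube of $G_\US$, or the improper subgraph returning the whole factor when $G_t$ meets every node); in each bundle simplex $\Delta_{b_i-1}$ it is the subface spanned by the edges of $G_t$ lying in $B_i$ (the whole simplex when $B_i$ contributes no edge); and in each loop ray it is the endpoint exactly when that loop is incident to $G_t$.

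The non-full tubes are precisely the $G_t$ for which $f_t$ is not already a facet of $\tbase_G$, namely those occupying only part of a bundle or omitting a loop incident to one of their nodes. I would record next that truncating $f_t$, of some codimension $c\ge 1$, introduces a single new facet (combinatorially $f_t\times\Delta_{c-1}$), which we label $G_t$, so that each remaining tube contributes exactly one facet. The decisive point is the ordering: truncating in increasing number of elements (nodes and edges) of $G_t$ guarantees that whenever $G_{t'}\subsetneq G_t$ the strictly smaller tube $G_{t'}$ is handled first. This is what replaces the increasing-dimension ordering of Theorem~\ref{t:graphasstrunc}, which genuinely fails here: inside the product $\tbase_G$ a tube with fewer elements may cut out a face of larger dimension, so dimension no longer tracks containment.

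The heart of the argument is an induction along this order. After truncating all tubes with at most $m$ elements I would maintain that (i) every not-yet-truncated tube $G_t$ has a well-defined associated face in the current polytope—its original $f_t$, or the combinatorial image to which $f_t$ was reassigned when an earlier truncation deleted it as a subface—and this face has positive codimension; and (ii) the facets created so far intersect pairwise exactly when their tubes are compatible. The reassignment in (i) is forced by the observation preceding the theorem, that truncating a face removes the yet-untruncated tube-subfaces it contains; the key point is that ordering by number of elements is consistent with these reassignments, so that when $G_t$'s turn arrives its current face is still available to be cut. For (ii) I would verify the compatibility dichotomy case by case against the geometry: nested tubes give facets meeting along the flag created by successive truncations; disjoint tubes that cannot be joined by a single edge inherit intersecting facets from the surviving product structure; while the two forbidden configurations—tubes joinable by a single edge of $G$, and tubes sharing a node but using incomparable edge-subsets of a common bundle—yield \emph{opposite}, non-adjacent facets, exactly as the facets labeled $\{a_1a_2e_1\}$ and $\{a_1a_2e_2\}$ of the square in Figure~\ref{f:2D-exmp}(a).

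Granting the induction, the facet-intersection poset of the final polytope is the set of tubings under reverse containment, giving Theorem~\ref{t:pseudo}; the dimension matches as above, and the loop rays are never severed into bounded facets, so $\KG$ is a polytopal cone exactly when $G$ has a loop and an honest polytope otherwise. The disconnected case reduces to this one via Theorem~\ref{t:disconnect}. I expect invariant (ii) to be the principal obstacle, since it is here that pseudograph compatibility departs from the simple-graph relation: one must show that the several parallel edges of a bundle (producing several pairwise-incompatible tubes) and the incident loops translate precisely into the non-adjacency, respectively the persistence, of the corresponding truncation facets, and that the element-count order makes these incidences appear correctly despite the reassignment of defunct tubes.
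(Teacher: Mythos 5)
Your overall strategy --- induct along the prescribed truncation order and maintain that, at each stage, the facet created for a tube meets exactly the faces corresponding to compatible tubes --- is the same as the paper's, which packages the inductive step via the notion of \emph{promotion}: truncating the face $F$ is equivalent to promoting the tube $G_F$ precisely when the faces properly intersecting or containing $F$ are those labeled by tubings compatible with $G_F$. Your dimension count, your identification of the faces $f_t$ inside the product, and your use of Lemma~\ref{l:simpletrunc} all match the paper.

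The genuine gap is in your invariant (ii), which you yourself flag as the principal obstacle but whose case analysis omits the configuration that the element-count ordering actually exists to handle: two incompatible tubes $G_t$ and $G_F$ that \emph{properly overlap}. Their labeled faces in $\tbase_G$ genuinely do intersect --- the paper records that this intersection corresponds to the tubing $\{G_t,\, G_t\cap G_F\}$ --- so they are not ``opposite, non-adjacent facets'' of the product in the way the two bundle-incomparable tubes of Figure~\ref{f:2D-exmp}(a) are. Instead, their common subface lies inside the face labeled by the intersection tube $G_F\cap G_t$, which has strictly fewer elements and is therefore truncated \emph{before} $G_F$; that earlier truncation removes the would-be intersection from the polytope. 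This removal mechanism --- not merely the ``availability'' of $f_t$ asserted in your invariant (i) --- is what the increasing-element-count order buys, and it is exactly what replaces the increasing-dimension order of Theorem~\ref{t:graphasstrunc}. Without it your induction cannot exclude spurious adjacencies between the facets of overlapping incompatible tubes, so invariant (ii) does not close. You would also need the paper's explicit criterion for when two faces of $\tbase_G$ are already disjoint (namely, when the union of their full-tube parts fails to be a tubing or the intersection of their edge-subset tubes fails to be a tube) to dispose of the remaining incompatible pairs, including the single-connecting-edge case you do mention.
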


\noindent
This immediately implies the combinatorial result of Theorem~\ref{t:pseudo}. The proof of this theorem is given in Section~\ref{s:proof}.  Notice the dimension of $\KG$ is the dimension of $\base_G$, which in turn equals $(n-1) + (b_i -1) + \cdots + (b_p -1) = n -1 +\red$, for $\red$ redundant edges, as claimed.

\begin{exmp}
We construct the pseudograph associahedron in Figure~\ref{f:3D-exmp}(b) from truncations.  The left side of Figure~\ref{f:3D-trunc-label} shows the pseudograph $G$ along with a labeling of its nodes and bundles. 
\begin{figure}[h]
\includegraphics{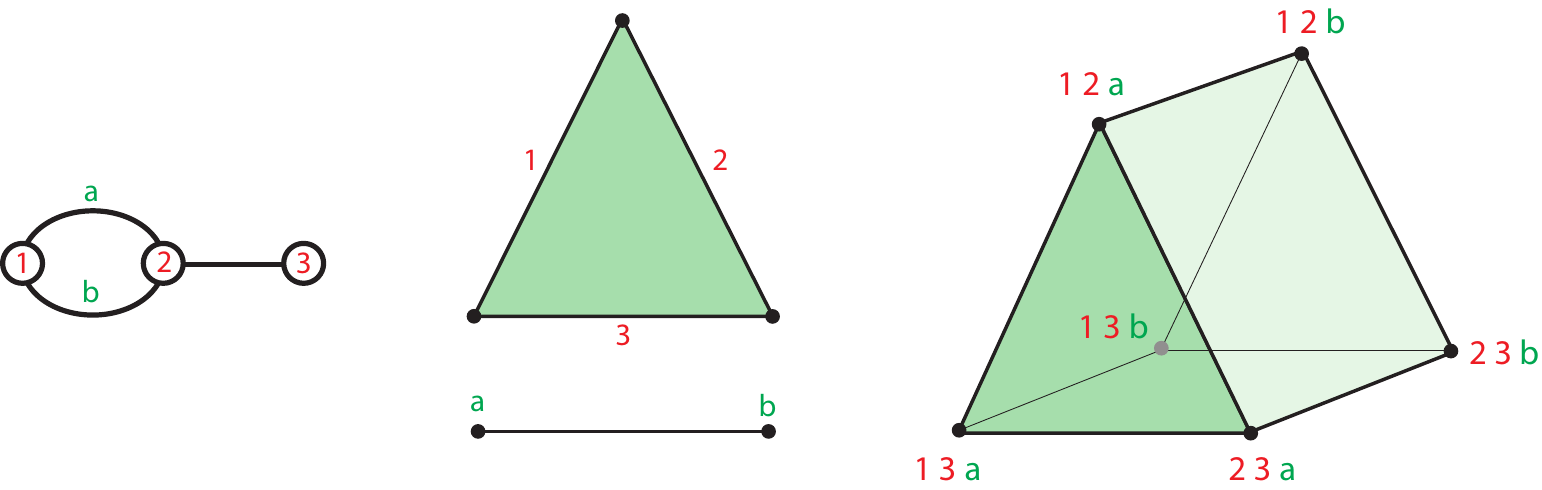}
\caption{A base polytope $\base_G$ and its labelings.}
\label{f:3D-trunc-label}
\end{figure}
(Notice the edge from node 2 to node 3 is not labeled since the bundle associated to this edge is the trivial $\Delta_0$ point.)  
Thus the base polytope $\base_G$ is the product of $\Delta_2 \times \Delta_1$, with the middle diagram providing the labeling on $\Delta_2$ and $\Delta_1$ from $G$.
The right side of the figure shows the induced labeling of the vertices of $\base_G$ from the labeling of $G$.

Figure~\ref{f:3D-trunc} shows the iterated truncation of $\base_G$ in order to arrive at $\KG$. 
\begin{figure}[h]
\includegraphics{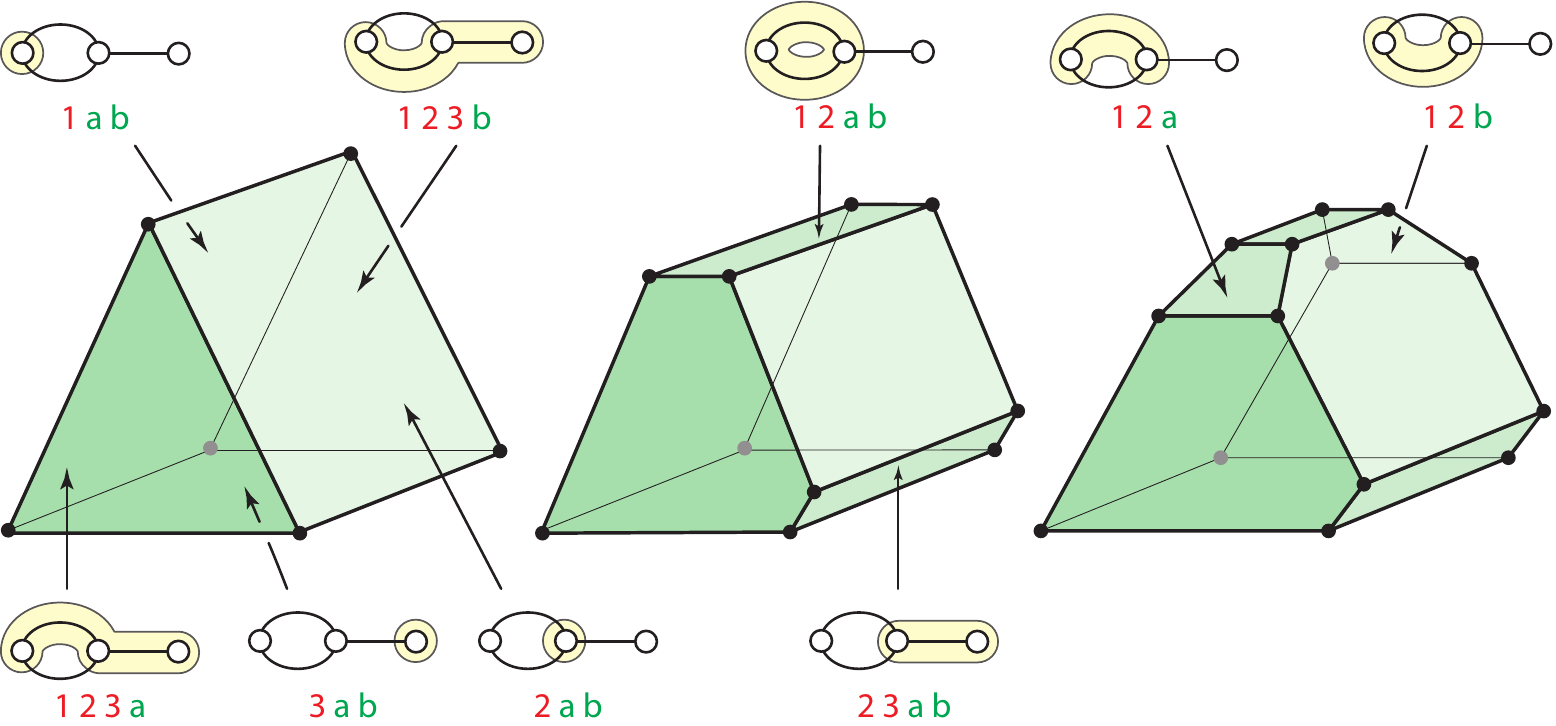}
\caption{Iterated truncations of $\base_G$ resulting in $\KG$ from  Figure~\ref{f:3D-exmp}(b).}
\label{f:3D-trunc}
\end{figure}
Lemma~\ref{l:simpletrunc} first requires truncating the faces of $\base_G$ labeled with full tubes.  There are five such faces in this case, three square facets and two edges.  Since the squares (labeled on the triangular prism on the left) are facets, their truncations do not change the topological structure of the resulting polyhedron.  The truncation of the two edges is given in the central picture of Figure~\ref{f:3D-trunc}, yielding $\tbase_G$.  This polytope is $\KG_\US \times \Delta_1$, a $K_4$ pentagonal prism, as guaranteed by the lemma.
Theorem~\ref{t:trunc} then requires truncations of the remaining faces labeled with tubes.
There are four such faces, two triangle facets (which are two facets of $\base_G$, labeled on the left of Figure~\ref{f:3D-trunc}) and two edges, resulting in the polyhedron $\KG$ on the right.  
\end{exmp}

\begin{exmp}
Let $G$ be a pseudograph of an edge with a loop attached at both nodes.   Figure~\ref{f:3D-loop} shows the polyhedral cone $\Delta_1 \times \ray^2$ along with the labeling of its four facets.
There are two full tubes, the front and back facets in (a), and thus their truncation does not alter the polyhedral cone.  
There are five other tubes to be truncated: two containing one element (a node), one with three elements (two nodes and an edge), and two facets with four elements (two nodes, one edge, one loop).  
By Theorem~\ref{t:trunc}, the truncation is performed in order of the number of elements in these tubes.
Figure~\ref{f:3D-loop}(b) shows the truncation of the edges assigned to tubes with one node.  Part (c) displays the result of truncating the edge labeled with a tube with three elements.  
\begin{figure}[h]
\includegraphics{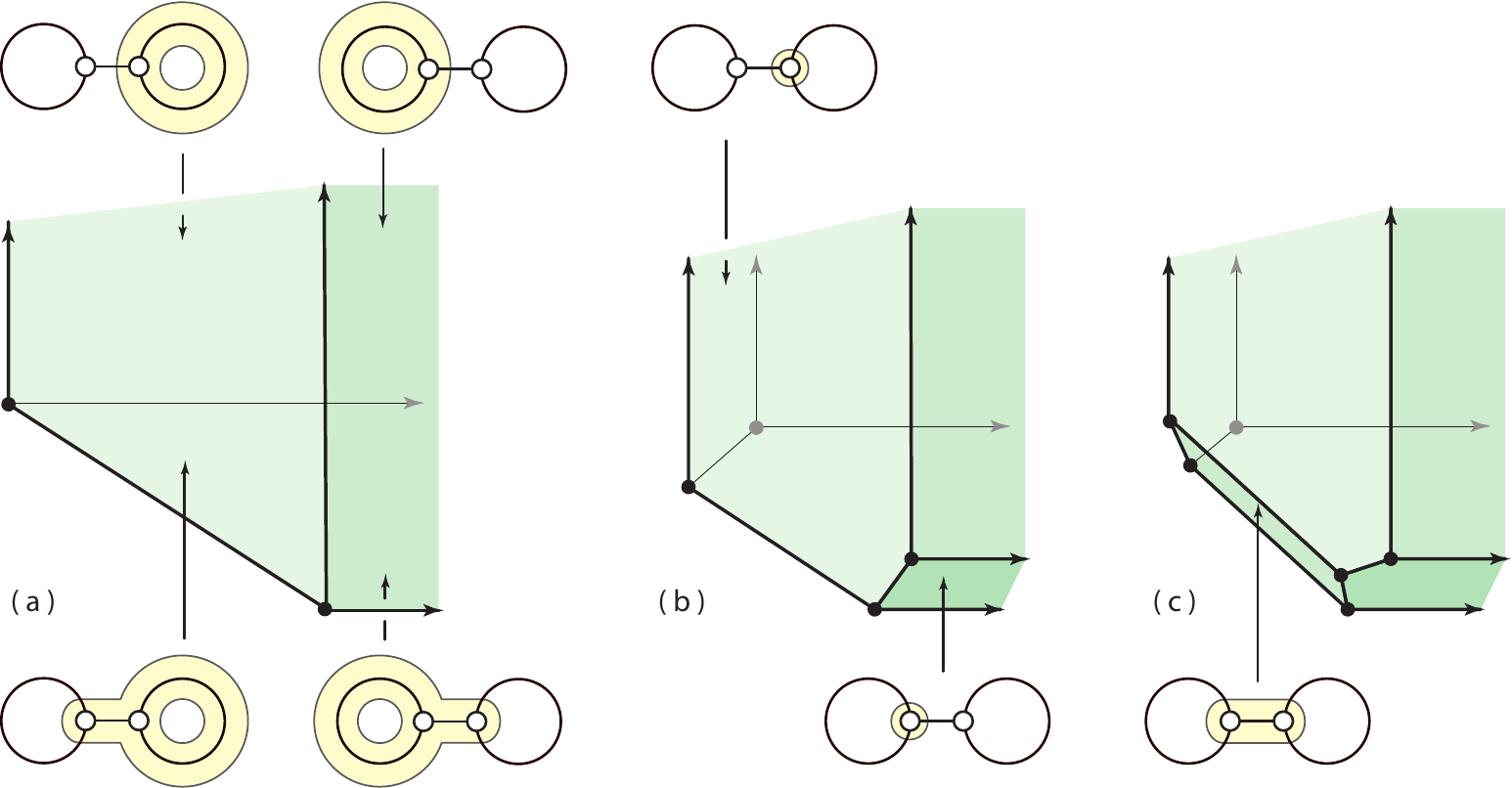}
\caption{An iterated truncation of $\Delta_1 \times \ray^2$, resulting in a graph associahedron.}
\label{f:3D-loop}
\end{figure}
\end{exmp}

\begin{exmp}
Figure~\ref{f:4D-ex1} displays a Schlegel diagram of the 4D tetrahedral prism $\Delta_3 \times \Delta_1$, viewed as the base polytope $\base_G$ of the pseudograph shown.
\begin{figure}[h]
\includegraphics{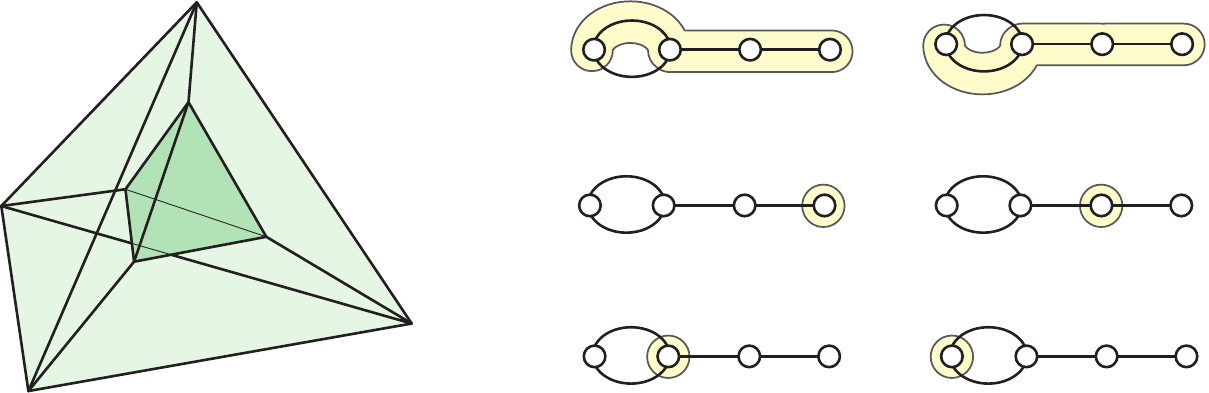}
\caption{A tetrahedral prism $\base_G$ along with labeling of tubes for its six facets.}
\label{f:4D-ex1}
\end{figure}
The six tubes of the pseudograph correspond to the six facets of $\base_G$.  The top two tubes are identified with tetrahedra whereas the other four are triangular prisms.  
Figure~\ref{f:4D-ex2} shows the iterated truncations of $\base_G$ needed to convert it into the pseudograph associahedron $\KG$.  
\begin{figure}[h]
\includegraphics{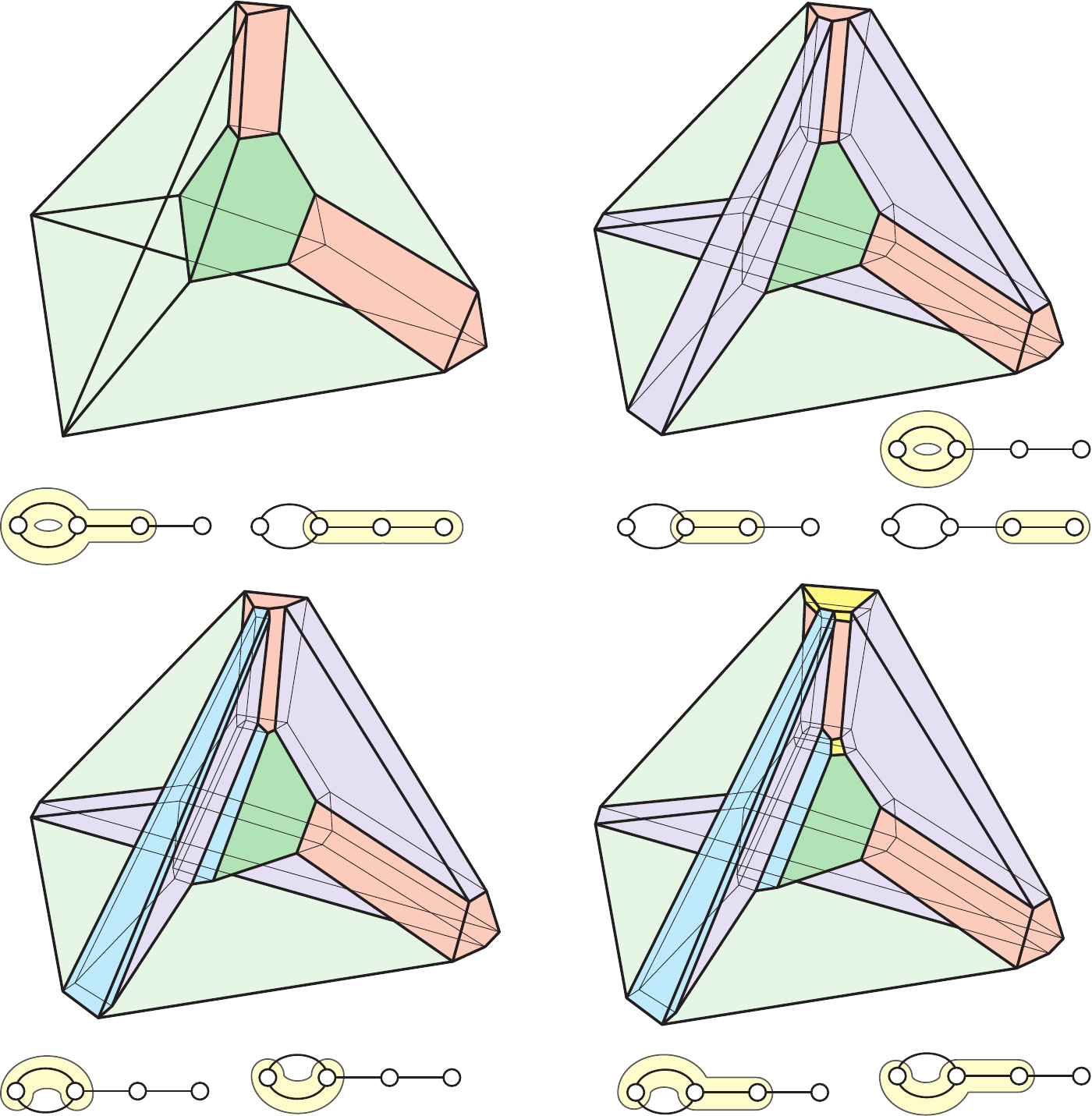}
\caption{An iterated truncation of the 4D tetrahedral prism, resulting in $\KG$.}
\label{f:4D-ex2}
\end{figure}
The first row shows two edges and three squares of $\base_G$ being truncated, which are labeled with full tubes.   The result, as promised by Lemma~\ref{l:simpletrunc} is $\KG_\US \times \Delta_1$, a $K_5$ associahedral prism.
We continue truncating as given by the bottom row, first two squares with three elements in their tubes, and then two pentagons, with five elements in their tubes. It is crucial that the truncations be performed in this order, resulting in $\KG$ as the bottom-right most picture.
\end{exmp}

%
%
\section{Edge Contractions}  \label{s:contract}

We have shown that any finite graph $G$ induces a polytope $\KG$.  Our interests now focus on deformations of pseudograph associahedra as their underlying graphs are altered.
 This section is concerned with contraction $G/e$ of an edge $e$, and the following section looks at edge deletions.   

\begin{defn}
An edge (loop) $e$ is \emph{excluded} by tube $G_t$ if $G_t$ contains the node(s) incident to $e$ but does not contain $e$ itself.
\end{defn}

\begin{defn}
Let $G$ be a pseudograph, $G_t$ a tube, and $e=(v,v')$ an edge.  Define
$$\Phi_e(G_t) =  
\begin{cases}
G_t & \ \ \ G_t \cap \{v,v'\} = \emptyset \\
G_t/e & \ \ \ e \in G_t \\
G_t/\{v,v'\} & \ \ \ \text{ $G_t$ excludes $e$ } \\
\emptyset  &  \ \ \ \text{ otherwise. }
\end{cases}$$
This map extends to $\Phi_e: \KG \to \K(G/e)$,
where given a tubing $T$ on $G$, $\Phi_e(T)$ is simply the set of tubes $\Phi_e(G_t)$ of $G/e$, for tubes $G_t$ in $T$.
\end{defn}

Figure~\ref{f:contract} shows examples of the map $\Phi_e$.  The top row displays some tubings on graphs where the edge $e$ to be contracted is highlighted in red.  The image of each tubing under $\Phi_e$ in $G/e$ is given below each graph.  Notice that $\Phi_e$ is not surjective in general since the dimension of $\K(G/e)$ can be arbitrarily higher than that of $\KG$.  For example, if $G$ is the complete bipartite graph $\Gamma_{2,n}$ with an extra edge $e$ between the two ``left'' nodes, then by Theorem~\ref{t:pseudo}, $\KG$ is of dimension $n+1$ whereas $\K(G/e)$ is of dimension $2n$.  Although not necessarily surjective, $\Phi_e$ is a poset map, as we now show.

\begin{figure}[h]
\includegraphics{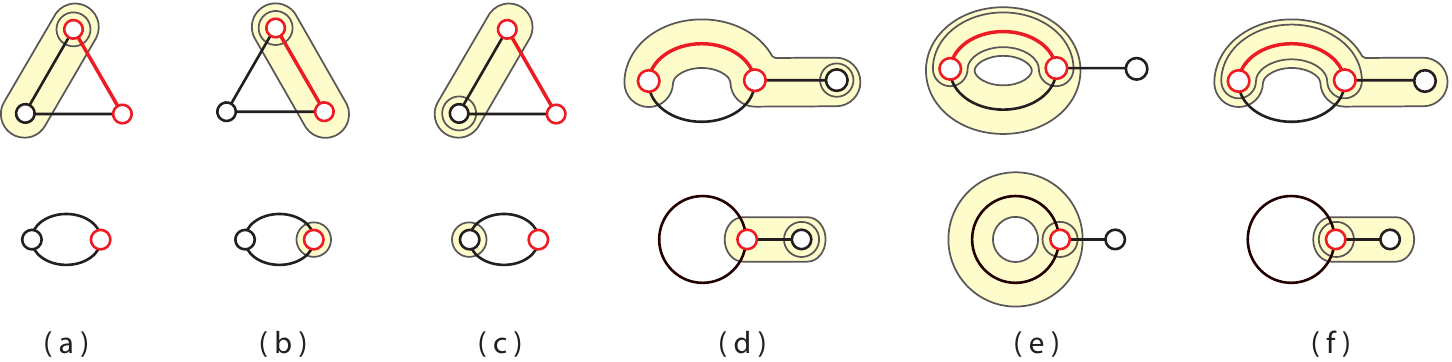}
\caption{Top row shows tubings on graphs, and the bottom row shows these tubings under the map $\Phi_e$, where the red edge $e$ has been contracted.}
\label{f:contract}
\end{figure}

\begin{prop} \label{p:contract}
For a pseudograph $G$ with edges $e$ and $e'$, $\Phi_e: \K G \to \K (G/e)$ is a poset map.  Moreover, the composition of these maps is commutative:  $\Phi_e \circ \Phi_{e'} = \Phi_{e'} \circ \Phi_{e}$.
\end{prop}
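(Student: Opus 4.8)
### Proof Strategy

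The plan is to prove the two assertions separately. For the claim that $\Phi_e$ is a poset map, recall that the face poset of $\KG$ is ordered by reverse subset containment of tubings (Theorem~\ref{t:pseudo}), so that a face $T$ is $\le$ a face $T'$ precisely when $T \supseteq T'$ as sets of tubes. Thus to show $\Phi_e$ is a poset map it suffices to verify that $T \supseteq T'$ implies $\Phi_e(T) \supseteq \Phi_e(T')$; since $\Phi_e(T)$ is defined tube-by-tube as $\{\Phi_e(G_t) : G_t \in T\}$ with empty images discarded, set containment is preserved almost immediately. The genuine content is the \emph{well-definedness} of $\Phi_e$ as a map of posets: I must check that if $T$ is a valid tubing of $G$, then $\Phi_e(T)$ is a valid tubing of $G/e$. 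This breaks into showing that each nonempty $\Phi_e(G_t)$ is a genuine tube of $G/e$ (a proper connected subgraph satisfying the edge-inclusion condition of the tube definition), that the images are pairwise compatible, and that the collection cannot contain all of $G/e$'s connected components.

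First I would verify the tube condition case by case according to the four branches of the definition of $\Phi_e$. The cases $G_t \cap \{v,v'\} = \emptyset$ and $e \in G_t$ are routine: in the first, $G_t$ survives untouched and is disjoint from $e$, so it remains a tube of $G/e$; in the second, contracting $e$ inside a connected subgraph keeps it connected, and properness and the edge-inclusion condition are inherited. The subtle case is when $G_t$ excludes $e$, where we set $\Phi_e(G_t) = G_t/\{v,v'\}$, identifying the two endpoints. Here I must confirm that $G_t$ remains connected after this identification (it does, since identifying nodes never disconnects a graph) and that the result is still a \emph{proper} subgraph of $G/e$ — this is where the ``otherwise'' branch sending $G_t$ to $\emptyset$ earns its keep, absorbing precisely the configurations where the identification would force $G_t$ to become all of some component.

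For pairwise compatibility, I would take two compatible tubes $G_t, G_{t'}$ of $G$ and show $\Phi_e(G_t)$ and $\Phi_e(G_{t'})$ are compatible in $G/e$, again organized by which of the four branches each tube falls into. Compatibility means one properly contains the other, or they are disjoint and non-adjacent. Nesting is clearly preserved by the node-identification and contraction operations. The delicate subcase is when $G_t$ and $G_{t'}$ are disjoint and cannot be joined by a single edge of $G$, yet after contracting $e$ their images might a priori become adjacent or overlap through the identified node; I would check that contracting the edge $e$ — which by disjointness and non-adjacency lies in neither tube and connects neither — cannot create a new single-edge connection between the images. I expect this compatibility analysis, particularly the interaction of the node-identification branch with the non-adjacency criterion, to be the main obstacle, since it is where the geometry of contraction most directly threatens the tubing axioms.

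Finally, for commutativity $\Phi_e \circ \Phi_{e'} = \Phi_{e'} \circ \Phi_e$, I would argue at the level of a single tube $G_t$, since both composites act tubewise. The point is that $(G/e)/e' = (G/e')/e$ as graphs, and that the four-way case distinction defining $\Phi$ depends only on the incidence relationship between $G_t$ and the endpoints of each edge, data which is unaffected by the order in which $e$ and $e'$ are contracted. I would tabulate the composite on the relevant combinations of branches — whether $G_t$ contains, excludes, or avoids each of $e$ and $e'$ — and observe that in every combination the two orders yield the same subgraph of the common contraction, with the $\emptyset$ outcome arising symmetrically. This last step should be a direct, if slightly tedious, bookkeeping verification rather than a conceptual difficulty.
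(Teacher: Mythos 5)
Your proposal follows essentially the same route as the paper: the poset-map claim is exactly the paper's one-line observation that applying $\Phi_e$ tube-by-tube preserves containment of tubings, and commutativity is the same $4\times 4$ case check on how a given tube meets $e$ and $e'$. The well-definedness verification you single out as the genuine content is actually omitted entirely from the paper's proof, so your outline is if anything more careful --- though note that your claim that the ``otherwise'' branch absorbs the properness failures is not quite accurate: that branch only catches tubes containing exactly one endpoint of $e$, whereas a tube that \emph{excludes} $e$ (e.g.\ the tube consisting of both endpoints and the other edge of a double edge) is sent by the third branch to $G_t/\{v,v'\}$, which can fail to be a proper subgraph of $G/e$, an edge case your properness check would have to confront.
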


\begin{proof}
For two tubings $T$ and $T'$ of $G$, assume $T \prec T'$.  For any tube $G_t \in T'$,  the tube $\Phi_e(G_t)$ is included in both $\Phi_e(T)$ and $\Phi_e(T')$.  Thus $\Phi_e(T) \prec \Phi_e(T')$, preserving the face poset structure.   To check commutativity, it is straightforward to consider the 16 possible relationships of edges $e$ and $e'$ with a given tube $G_t$ of $G$, four each as in the definition of $\Phi_e(G_t)$.  For each possibility,
the actions of $\Phi_e$ and $\Phi_{e'}$ commute.
\end{proof}

For any collection $E$ of edges of $G$, let $\Phi_E:\KG \to \K(G/E)$ denote the composition of maps $\{\Phi_e ~|~e \in E\}.$  If $E$ is the set of edges of a connected subgraph $H$ of $G$, then contracting $E$ will collapse $H$ to a single node.  The resulting graph $G/H$ is the \emph{contraction} of $G$ with respect to $H$.   The following result describes the combinatorics of the facets of $\KG$ based on contraction.

\begin{thm} \label{t:prod}
Let $G$ be a connected pseudograph.  The facet associated to tube $G_t$ in $\KG$ is 
$$\KG_t \times M$$ 
where $M$ is the facet of $\K (G/G_t)$ associated to the single node of $G/G_t$ which $G_t$ collapses to.
In other words, the contraction map $\Phi_E: \KG \to \K(G/G_t)$ restricted to tubings of $G_t$ is the canonical projection onto $M$.
\end{thm}

\begin{proof} 
Let $v$ be the single node of $G/G_t$ which $G_t$ collapses to.
Given a tubing $T$ of the subgraph induced by $G_t$, and $T'$ a tubing of $G/G_t$ which contains the tube $\{v\}$, we define a map:
$$\rho(T,T') \ = \ T \ \cup \ \{G_t\} \ \cup \ \{G_{t'} \in T' ~|~ v \notin G_{t'} \} \ \cup \ \{ (G_{t'}-v) \cup G_t ~|~ v \in G_{t'} \in T'\} \, .$$
This is an isomorphism from the Cartesian product to the facet of $\KG$ corresponding to the tube $G_t$, which can be checked to preserve the poset structure.
\end{proof}

The following corollary describes the relationship between a graph and its underlying simple graph, at the level of graph associahedra.  

\begin{cor}
Let $G_\US$ be the underlying simple graph of a connected graph $G$ with $\red$ redundant edges.  The corresponding facet of $\KG$ for the tube $G_\US$ is equivalent to $\KG_\US \times \per_\red$.
\end{cor}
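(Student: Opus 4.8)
The plan is to exhibit one specific tube of $G$ whose associated facet, under Theorem~\ref{t:prod}, splits as $\K G_\US \times \per_\red$. First I would take the tube $G_t$ to be a spanning connected subgraph of $G$ that keeps exactly one edge from each bundle and contains no loops. Since $G$ is connected so is $G_\US$, hence such a $G_t$ exists; it includes an edge between every adjacent pair of nodes, and it is a proper subgraph whenever $\red \geq 1$, so it is a genuine tube (for $\red = 0$ the graph is already simple and there is no such facet). By construction $G_t$ is isomorphic to $G_\US$, and since the pseudograph associahedron depends only on the isomorphism class of the graph, $\K G_t \cong \K G_\US$.

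Next I would compute the contraction $G/G_t$. Because $G_t$ is spanning, contracting its edges collapses all $n$ nodes of $G$ to a single node $v$. The edges of $G$ outside $G_t$ are exactly the $\red$ redundant edges, namely the $b_i - 1$ leftover edges in each bundle together with the $\lambda$ loops, and each of these survives the contraction as a loop at $v$. Thus $G/G_t$ is the one-node pseudograph carrying precisely $\red$ loops, and Proposition~\ref{p:permuto}(2) gives $\K(G/G_t) \cong \per_\red \times \ray$.

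It remains to identify the facet $M$ of $\K(G/G_t)$ associated to the single node $v$, that is, to the one-element tube $\{v\}$ that contains $v$ but none of its loops. I would argue that under the isomorphism $\K(G/G_t) \cong \per_\red \times \ray$ this facet is the base $\per_\red \times \{0\} \cong \per_\red$: the ray factor records the \emph{unbounded loop direction}, and the node tube $\{v\}$, which excludes every loop, is exactly the bounded facet at the apex end of the ray, as opposed to a side facet of the form $F \times \ray$ with $F$ a facet of $\per_\red$. The smallest cases confirm this: for $\red = 1$ the cone is a ray whose only proper face is its apex $\per_1$, and for $\red = 3$ it is the hexagonal base of $\per_3 \times \ray$ shown in Figure~\ref{f:3D-permuto-flat}. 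Substituting $\K G_t \cong \K G_\US$ and $M \cong \per_\red$ into Theorem~\ref{t:prod} then yields that the facet of $\KG$ for the tube $G_\US$ is $\K G_t \times M \cong \K G_\US \times \per_\red$.

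The step I expect to be the main obstacle is this last identification: pinning $M$ down precisely as $\per_\red$, rather than merely as some facet of $\per_\red \times \ray$ of the right dimension. This forces me to track the isomorphism of Proposition~\ref{p:permuto}(2) carefully enough to see that the node tube $\{v\}$ lands on the base of the ray and not on a side facet. The dimension count $\dim(\K G_\US \times \per_\red) = (n-1) + (\red - 1) = n - 2 + \red$, which is one less than $\dim \KG$ by Theorem~\ref{t:pseudo}, is a reassuring consistency check but does not by itself select the correct facet, so the genuine work lies in the combinatorial matching of $\{v\}$ with the apex of the ray.
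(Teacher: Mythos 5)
Your proposal is correct and follows essentially the same route as the paper: apply Theorem~\ref{t:prod} to the tube $G_\US$, observe that $G/G_\US$ is a bouquet of $\red$ loops, and invoke Proposition~\ref{p:permuto}(2). The only difference is that you spell out the identification of the facet $M$ with the bounded end $\per_\red$ of the cone $\per_\red\times\ray$ (and note the spanning, one-edge-per-bundle choice of $G_t$), details the paper's one-line proof leaves implicit; your resolution of that step is accurate.
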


\begin{proof}
This follows immediately from Proposition~\ref{p:permuto} and Theorem~\ref{t:prod} above, since contracting the underlying simple graph $G_\US$ in $G$ gives us a bouquet of $n$ loops.
\end{proof}

\begin{exmp}
Figure~\ref{f:bouquet}(a) shows a graph $G$ with two nodes and seven edges, with one such edge $e$ highlighted in red.  By Proposition~\ref{p:permuto}, we know the pseudograph associahedron $\KG$ is the permutohedral prism $\per_7 \times \Delta_1$.  The tube given in part (b), again by Proposition~\ref{p:permuto}, is the permutohedron $\per_6$.  By the corollary above, we see $\per_6$ appearing as a codimension two face of $\per_7 \times \Delta_1$.  Figure~\ref{f:bouquet}(c) shows a graph $G$ and its underlying simple graph $G_\US$, outlined in red, and redrawn in (d).  The corresponding facet of tube $G_\US$ in $G$ is $\per_6$, the pseudograph associahedron of (b), and the pseudograph associahedron $\KG_\US$ of (d).
\end{exmp}

\begin{figure}[h]
\includegraphics{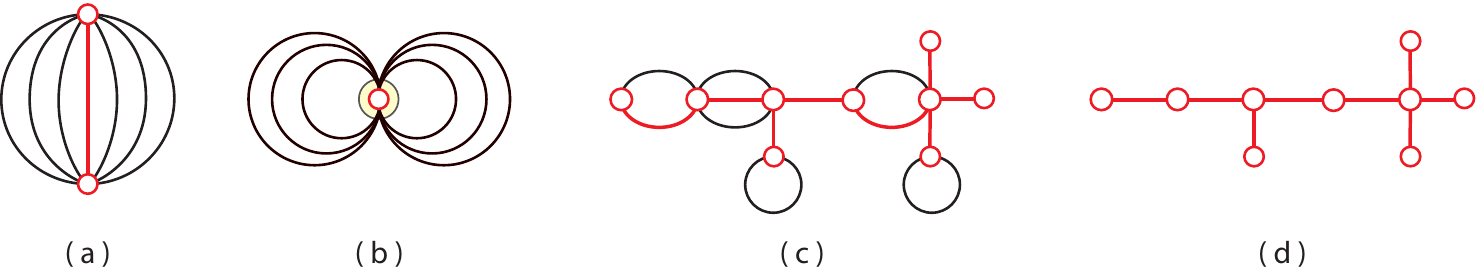}
\caption{Relationships between permutohedra and underlying graphs.}
\label{f:bouquet}
\end{figure}

%
%
\section{Edge Deletions}  \label{s:delete}
\subsection{}

We now turn our focus from edge contractions $G/e$ to edge deletions $G-e$.  Due to Theorem~\ref{t:disconnect}, we have had the luxury of assuming all our graphs to be connected, knowing that pseudograph associahedra for disconnected graphs is a trivial extension.  In this section, due to deletions of edges, no assumptions are placed on the graphs.

\begin{defn}
A \emph{cellular surjection} from  polytopes $P$ to $Q$ is a map $f$ from the face posets of $P$ to $Q$  which preserves the poset structure, and which is onto. That is, if $x$ is a subface of $y$ in
$P$ then $f(x)$ is a subface of or equal to $f(y)$.   It is a \emph{cellular projection} if it also has the property that the dimension of $f(x)$ is less than or equal to the dimension of $x$.
\end{defn}

In \cite{to}, Tonks found a cellular projection from permutohedron to associahedron. In this projection, a face of the permutohedron, represented by a \emph{leveled tree}, is taken to its underlying tree, which corresponds to a face of the associahedron.  The new revelation of Loday and Ronco \cite{lr} is that this map gives rise to a Hopf algebraic projection, where this algebra of binary trees is seen to be embedded in the Malvenuto-Reutenauer algebra of permutations.  Recent work by Forcey and Springfield \cite{fs} show a fine factorization of the Tonks cellular projection through all connected graph associahedra, and then an extension of the projection to disconnected graphs. Several of these cellular projections through polytopes are also shown to be algebra  and coalgebra homomorphisms. Here we further extend the maps based on deletion of edges to all pseudographs, in anticipation of future usefulness to both geometric and algebraic applications.

\begin{defn}
Let $G_t$ be a tube of $G$, where $e$ is an edge of $G_t$.  We say $e$ \emph{splits} $G_t$ into tubes $G_{t'}$ and $G_{t''}$ if $G_t - e$ results in two disconnected tubes $G_{t'}$ and $G_{t''}$ such that 
$$G_t \ = \ G_{t'} \cup G_{t''} \cup \{e\}.$$
\end{defn}

\begin{defn}
Let $G$ be a pseudograph, $G_t$ a tube and $e$ be an edge of $G$. Define 
$$\Theta_e(G_t) =  
\begin{cases}
G_t & \ \ \ \text{ if $e \notin G_t$ } \\
G_t-e & \ \ \ \text{ if $e \in G_t$ and $e$ does not split $G_t$ } \\
\{G_{t'}, G_{t''}\} & \ \ \ \text{ if $e$ splits $G_t$ into compatible tubes $G_{t'}$ and $G_{t''}$ } \\
\emptyset & \ \ \ \text{ otherwise.}
\end{cases}$$
This map extends to $\Theta_e: \KG \to \K(G - e)$,
where given a tubing $T$ on $G$, $\Theta_e(T)$ is simply the set of tubes $\Theta_e(G_t)$ of $G-e$, for tubes $G_t$ in $T$.
\end{defn}

Roughly, as a single edge is deleted, the tubing under $\Theta$ is preserved ``up to connection.'' That is, if the nodes of a tube $G_t$ are no longer connected by edge deletion, $\Theta(G_t)$ becomes the two tubes split by $e$, as long as these two tubes are compatible.  Figure~\ref{f:tonks-vertex} shows 
\begin{figure}[h]
\includegraphics{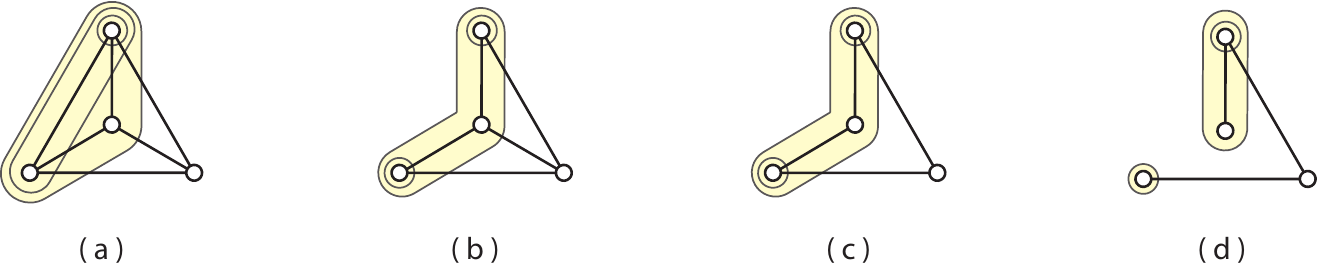}
\caption{The projection $\Theta$ factored by graphs, from the complete graph to the path.}
\label{f:tonks-vertex}
\end{figure}
maximal tubes on four different graphs, each corresponding to a vertex of its respective graph associahedron.  As an edge gets deleted from a graph, progressing to the next, the map $\Theta$ shows how the tubing is being factored through.  In this particular case, a vertex of the permutohedron (a) is factored through to a vertex of the associahedron (d) through two intermediary graph associahedra.  

\begin{rem}  
For a tubing $T$ of $G$ and a \emph{loop} $e$ of $G$, we find that the contraction and deletion maps of $e$ agree; that is, $\Theta_e(T) = \Phi_e(T)$.  
\end{rem}

\subsection{}

We now prove that $\Theta$ is indeed a cellular surjection, as desired.  The following is the analog of Proposition~\ref{p:contract} for edge deletions.  

\begin{prop} \label{p:delete}
For a pseudograph $G$ with edges $e$ and $e'$, $\Theta_e: \K G \to \K (G - e)$ is a cellular surjection. Moreover, the composition of these maps is commutative: $\Theta_e \circ \Theta_{e'} = \Theta_{e'} \circ \Theta_{e}$.
\end{prop}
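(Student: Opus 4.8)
The plan is to establish the two claims separately, mirroring the structure of the proof of Proposition~\ref{p:contract}. For the poset-map portion, I would first verify that $\Theta_e$ sends compatible tubes to compatible tubes, so that it genuinely sends tubings to tubings of $G-e$. This requires checking that if $G_s$ and $G_t$ are compatible in $G$, then the images $\Theta_e(G_s)$ and $\Theta_e(G_t)$ (each possibly a split pair) remain pairwise compatible in $G-e$; the key observation is that deleting an edge can only make two disjoint tubes \emph{more} separated, never less, so the ``cannot be connected by a single edge'' condition is preserved. Once $\Theta_e$ is known to be well-defined on tubings, the monotonicity argument is the same as before: if $T \prec T'$, then every tube $G_t \in T'$ contributes $\Theta_e(G_t)$ to both $\Theta_e(T)$ and $\Theta_e(T')$, so $\Theta_e(T) \subseteq \Theta_e(T')$ up to the splitting, giving $\Theta_e(T) \prec \Theta_e(T')$.

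The genuinely new feature here, compared to the contraction case, is \emph{surjectivity}: we must show every tubing of $G-e$ arises as $\Theta_e(T)$ for some tubing $T$ of $G$. I would prove this by constructing an explicit section. Given a tubing $S$ of $G-e$, I build a preimage tubing $T$ of $G$ by reinterpreting each tube of $S$ as a tube of $G$ (adding back the edge $e$ wherever both its endpoints lie in a common tube and this does not violate compatibility). The natural candidate is to keep every tube of $S$ as a tube of $G$, observing that a subgraph connected in $G-e$ is still connected in $G$; the only subtlety is that adjacent tubes in $S$ that were separated precisely because of the absence of $e$ may need to be merged or adjusted so that the compatibility condition of $G$ is met. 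Checking that $\Theta_e(T) = S$ for this constructed $T$ then completes surjectivity. I expect the construction of this section, and verifying it lands in a valid tubing of $G$, to be the main obstacle, since the split-into-two-tubes case of $\Theta_e$ means the fiber structure is not simply ``forget the edge.''

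For commutativity, I would argue exactly as in Proposition~\ref{p:contract}: it suffices to analyze the effect of $\Theta_e$ and $\Theta_{e'}$ on a single tube $G_t$ of $G$ and confirm the outcomes agree regardless of order. Since $\Theta_e$ has four cases (edge absent, edge present but non-splitting, edge splitting into compatible tubes, otherwise), there are sixteen combined cases to examine, but most collapse immediately: if neither $e$ nor $e'$ lies in $G_t$, both maps fix $G_t$; if they lie in different ``parts'' the deletions act independently. The only delicate cases are when both $e$ and $e'$ are splitting edges, where one must confirm that the resulting collection of tubes is the same set whether one deletes $e$ then $e'$ or vice versa. Here I would note that deleting two edges from a connected tube produces the connected components of $G_t - \{e,e'\}$, a description manifestly symmetric in $e$ and $e'$, so the order cannot matter provided the compatibility side-conditions are checked to hold symmetrically.
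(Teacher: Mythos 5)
Your proposal is correct and follows essentially the same route as the paper: the same monotonicity argument for the poset-map claim, the same iterative merging of $e$-adjacent incompatible tubes to build a preimage for surjectivity, and the same observation that the connected components of $G_t - \{e, e'\}$ are symmetric in $e$ and $e'$ for commutativity. The extra well-definedness check you flag (that images of compatible tubes remain pairwise compatible) is a detail the paper leaves implicit, but it does not alter the argument.
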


\begin{proof}
For two tubings $U$ and $U'$ of $G$, assume $U \prec U'$.  For any tube $G_t \in U'$, the tube $\Theta_e(G_t)$ is included in both $\Theta_e(U)$ and $\Theta_e(U')$.  Thus $\Theta_e(U) \prec \Theta_e(U')$, preserving the face poset structure.

The map $\Theta$ is surjective, since given any tubing $U$ on $G-e$, we can find a preimage $T$ such that $U =
\Theta_e(T)$ as follows: First consider all the tubes of $U$ as a candidate tubing of $G$. If it is a valid
tubing, we have our $T.$ If not, there must be a pair of tubes $G_t'$ and $G_t''$ in $U$ which are adjacent via
the edge $e$ and for which there are no tubes containing either $G_t'$ or $G_t''$.  Let $U_1$ be the result of replacing that pair in $U$ with the single tube $G_t = G_t' \cup G_t''$. If $U_1$ is a valid tubing of $G$, then let $T=U_1$. If not, continue inductively.

To prove commutativity of map composition, consider the image of a tubing  of $G$ under either composition.  A tube of $G$ that is a tube of both $G-e$ and $G-e'$ will persist in the image. Otherwise it will be split into compatible tubes, perhaps twice, or forgotten. The same smaller tubes will result regardless of the order of the splitting. 
\end{proof}

\begin{rem}
If $e$ is the only edge between two nodes of $G$, then $\Theta_e$ will be a cellular \emph{projection} between two  polytopes or cones of the same dimension. Faces will only be mapped to faces of smaller or equal dimension. However, if $e$ is a multiedge, then $G-e$ is a tube of $G$.  In this case, the map $\Theta_e$ projects all of $\KG$ onto a single facet of $\KG$, where there may be faces mapped to a face of larger dimension. 
An example of a deleted multiedge is given in Figure~\ref{f:tonks-multi}.
\end{rem}

\begin{figure}[h]
\includegraphics{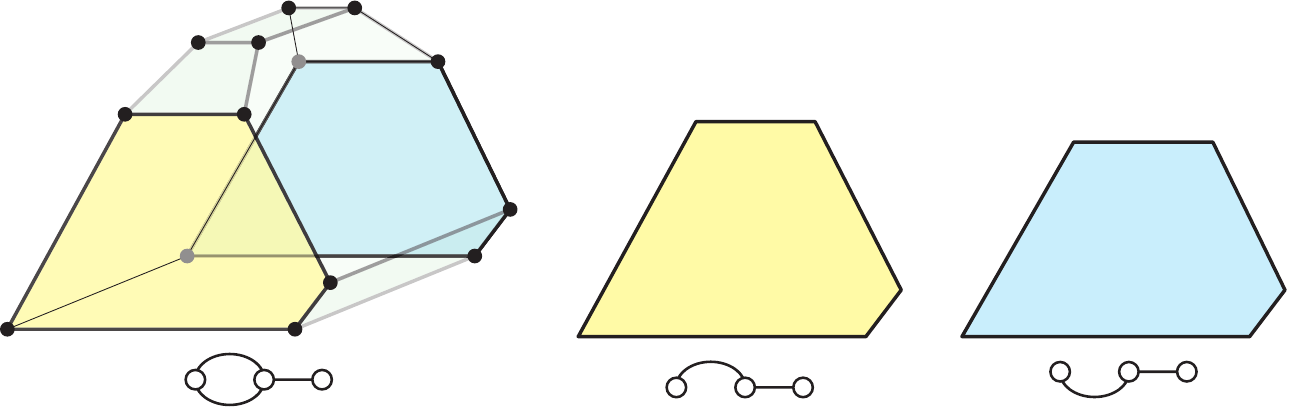}
\caption{The cellular surjection $\Theta_e$ for two different multiedges of $G$ from the example in Figure~\ref{f:3D-exmp}(b).}
\label{f:tonks-multi}
\end{figure}

For any collection $E$ of edges of $G$, denote $\Theta_E$
as the composition of projections $\{\Theta_e ~|~e \in E\}$.
Let $\Gamma_n$ be the complete graph on $n$ numbered nodes, and let $E$ be the set of all edges of $\Gamma_n$ except for the path in consecutive order from nodes $1$ to $n$.  Then $\Theta_E$ is equivalent to the Tonks projection \cite{fs}. Thus, by choosing any order of the edges to be deleted, there is a factorization of the Tonks cellular projection through various graph associahedra. An example of this, from the vertex perspective, was shown in Figure~\ref{f:tonks-vertex}.  

\begin{figure}[h]
\includegraphics{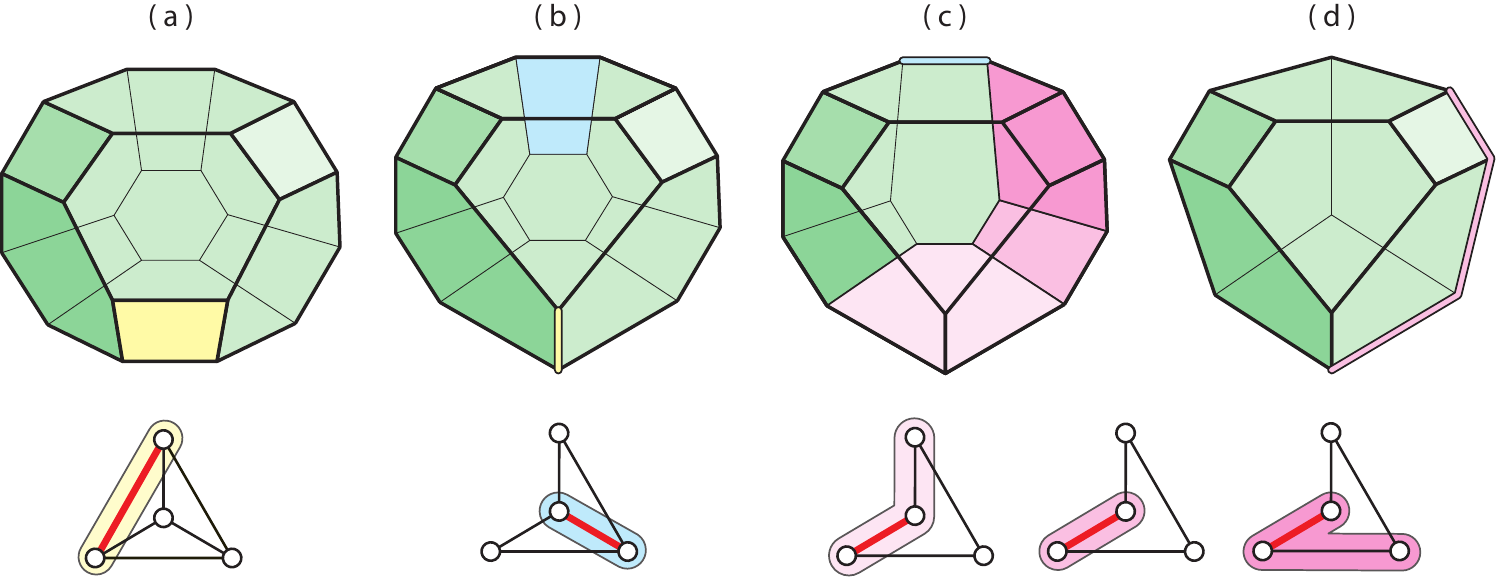}
\caption{A  factorization of the Tonks projection through 3D graph associahedra. The shaded facets
correspond to the shown tubings, and are collapsed as indicated to respective edges.   The permutohedron $P_4$ in (a), through a sequence of collapses, is transformed to the associahedron $K_5$ in (d).} 
\label{f:tonks-facet}
\end{figure}

The same map, from the facet viewpoint, is given in Figure~\ref{f:tonks-facet}.  Part (a) shows the permutohedron $\per_4$, viewed as $\K\Gamma_4$.  A facet of this polyhedron is highlighted and below it is the tube associated to the facet.  Deleting the (red) edge in the tube, thereby splitting the tube into two tubes, corresponds to collapsing the quadrilateral face into an interval, shown in part (b).  A similar process is outlined going from (b) to (c).  Figure~\ref{f:tonks-facet}(c) shows three faces which are highlighted, each with a corresponding tube depicted below the polyhedron.   These are the three possible tubes such that deleting the (red) edge of each tube produces a splitting of the tube into two compatible tubes.  Such a split corresponds to the collapse of the three marked facets of (c), resulting in the associahedron shown in (d).


%
%
\section{Realization}   \label{s:real}
\subsection{}

Let $G$ be a pseudograph without loops.  We now present a realization of $\KG$, assigning an integer coordinate to each of its vertices.  From Theorem~\ref{t:pseudo}, the vertices of $\KG$ are in bijection with the maximal tubings of $G$.  For each such maximal tubing $T$, we first define a map $f_T$ on each edge of each \emph{bundle} of $G$.   

\begin{nota}
Let $|G|$ denote the number of nodes and edges of $G$.  For a tube $G_t$, let $V(t)$ denote the node set of $G_t$, and let $E(i,t)$ denote the edges of bundle $B_i$ in $G_t$.
\end{nota}

For a given tubing $T$, order the edges of each bundle $B_i$ by the number of tubes of $T$ that do \emph{not} contain each $e$ in $B_i$.  Let $e(i,j)$ refer to the $j$-th edge in bundle $B_i$ under this ordering.  Thus $e(i,j)$ is contained in more tubes than $e(i,j+1)$.  Let $G_{e(i,j)}$ be the largest tube in $T$ that contains $e(i,j)$ but not $e(i,j+1)$.  Note that $G_{e(i,b_i)}$ is the entire graph $G$.  We assign a value $f_T$ to each edge in each bundle of $G$, as follows:
$$f_T(e(i,j))  \ = \  
\begin{cases}
\ \ \displaystyle{c \ + \ \sum_{x=1}^{b_i-1} \bigg(\ 2 \left|G - G_{e(i,x)}\right|  \ - \ 1 \ \bigg)} & \ \ \ \ j=1 \\[.3in]
\ \ \displaystyle{c^{\; j-1} \cdot (c-1) \ - \ \bigg(\ 2 \left|G - G_{e(i,j-1)}\right|  \ - \ 1 \ \bigg)}  & \ \ \ \ j \neq 1 \\
\end{cases}$$
for the constant $c = |G|^2$.
We assign $f_T(v)$ to each node of $G$ recursively by visiting each tube of $T$ in increasing order of size and ensuring that for all nodes and edges $x \in G_t$,
$$\sum_{x \in G_t} f_T(x) \ = \ c^{\; |V(t)|} \ + \ \sum_i c^{\; E(i,t)} \ + \ \left|G - G_t\right|^2 \, .$$

\begin{thm} \label{t:hull}
Let $G$ be a pseudograph without loops, with an ordering $v_1, v_2, \ldots, v_n$ of its nodes, and an ordering $e_1, e_2, \ldots, e_k$ of its edges.  For each maximal tubing $T$ of $G$, the convex hull of the points
\begin{equation}
\label{e:hull}
\bigg(f_T(v_1), \ldots, f_T(v_n), f_T(e_1), \ldots, f_T(e_k)\bigg)
\end{equation}
in $\R^{n+k}$  yields the pseudograph associahedron $\KG$.
\end{thm}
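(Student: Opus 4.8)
The plan is to produce an explicit $H$-description of the convex hull and to read off its face poset from it. For a tube $G_t$, let $L_t(p)=\sum_{x\in G_t}p_x$ be the linear functional summing the coordinates indexed by the nodes and edges of $G_t$, and let
\[
R(G_t)\ =\ c^{\,|V(t)|}\ +\ \sum_i c^{\,E(i,t)}\ +\ \left|G-G_t\right|^2
\]
be the right-hand side of the recursion defining $f_T$. The entire argument rests on the following inequality lemma: \emph{for every tube $G_t$ and every maximal tubing $T$ one has $L_t(f_T)\ge R(G_t)$, with equality if and only if $G_t\in T$.} Granting it, each hyperplane $\{L_t=R(G_t)\}$ supports the hull and its contact face has vertex set $\{f_T : G_t\in T\}$; the vertex--facet incidences then mirror the membership relation between maximal tubings and tubes, which is exactly what the realization requires.

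First I would fix the affine span of the points in \eqref{e:hull}. A telescoping computation on the explicit edge values shows that for each bundle $B_i$ the sum $\sum_{e\in B_i}f_T(e)$ collapses to the constant $c^{\,b_i}$, independent of $T$; together with the whole-graph instance $L_G(f_T)=R(G)=c^{\,n}+\sum_i c^{\,b_i}$ of the recursion this forces $\sum_v f_T(v)=c^{\,n}$. Hence every point lies in the affine subspace $A\subset\R^{n+k}$ cut out by the $m$ bundle equations and the node-sum equation, where $m$ is the number of bundles. These $m+1$ equations are independent, so $\dim A=n+k-(m+1)=n-1+\red$, matching $\dim\KG$ from Theorem~\ref{t:pseudo}; this is precisely how the auxiliary edge coordinates are absorbed.

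The inequality lemma is the main obstacle. The equality case $G_t\in T$ is just the defining recursion, provided that recursion is well posed, so I would first check the latter: visiting the tubes of $T$ in increasing size, the $n-1$ steps that enlarge the current union by a new \emph{node} determine the node values uniquely through the sum equation, while the remaining $\red$ steps---those that enlarge a tube only by adding further edges of an already-present bundle---carry no new node and are instead \emph{automatically consistent}, this being exactly the content of the telescoping design of the values $f_T(e(i,j))$. The real work is the strict inequality $L_t(f_T)>R(G_t)$ when $G_t\notin T$. Here I would expand $L_t(f_T)$ as a signed combination of the recursion values $R(\cdot)$ over the tubes of $T$ that $G_t$ meets and compare with $R(G_t)$ in base $c=|G|^2$. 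The exponents $|V(t)|$ and $E(i,t)$ record node and bundle data, while the units term $|G-G_t|^2$ is a genuine base-$c$ digit since $|G-G_t|<|G|$; a maximal tubing that fails to contain the compatible or nested $G_t$ produces a controlled positive excess in this expansion. The delicate points are the tightness of the units digit---it can occupy almost a full place value---and keeping the signs of the correction terms under control so that the excess is strictly positive rather than merely nonzero.

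Finally I would assemble the realization. By the lemma every $f_T$ satisfies all tube inequalities, with equality exactly on the tubes of $T$; since a maximal tubing has $n-1+\red$ tubes---the codimension of a vertex in Theorem~\ref{t:pseudo}---each point meets exactly $\dim A$ of the supporting hyperplanes, so it is a vertex and the hull is simple. Every tube lies in some maximal tubing, so each $\{L_t=R(G_t)\}$ is an actual facet and no spurious facets arise. As the hull is simple, its face lattice is determined by its vertex--facet incidences; sending the facet $\{L_t=R(G_t)\}$ to $G_t$ and the vertex $f_T$ to $T$ matches these incidences with those of $\KG$ (also simple, by Theorem~\ref{t:pseudo}), yielding an isomorphism of face posets under reverse subset containment. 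The convex hull thus realizes $\KG$, completing the proof.
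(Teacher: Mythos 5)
Your setup and your key lemma are exactly the paper's: the functional $L_t$, the right-hand side $R(G_t)$ (the paper's $\Lambda(G_t)$), the affine span cut out by the bundle equations and the node-sum equation, and the central claim that $L_t(f_T)\ge R(G_t)$ with equality precisely when $G_t\in T$ (the paper's Lemma~\ref{l:halfspace}). Your telescoping verification of $\sum_{e\in B_i}f_T(e)=c^{b_i}$ is correct, and your dimension count is right. However, the final assembly has a genuine gap: you assert that ``no spurious facets arise,'' but nothing in your argument rules out a facet of $\CH$ whose supporting hyperplane is not among the $\{L_t=R(G_t)\}$. Your inequality lemma only establishes the inclusion $\CH\subseteq\HY$ where $\HY$ is the region cut out by all the proposed inequalities; to know the facet list is complete you must also prove $\HY\subseteq\CH$. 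Note that your simplicity claim is itself circular here: ``each vertex lies on exactly $\dim A$ of the \emph{listed} hyperplanes'' does not give simplicity if unlisted facets exist, and the face lattice cannot be read off vertex--facet incidences until the full facet set is known. The paper devotes a separate lemma (Lemma~\ref{l:halfspaceinpoly}) to exactly this: it first shows $\HY$ is bounded by extracting coordinatewise bounds from the full-tube and maximal-tube halfspaces, then supposes a vertex $v^*$ of $\HY\setminus\CH$ exists, observes that the hyperplanes through $v^*$ correspond to a set of tubes containing an incompatible pair, and derives a contradiction in two cases (bundle incompatible versus bundle compatible, the latter using the arithmetic inequality of Lemma~\ref{l:inequality}). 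Your proposal contains no substitute for this step.

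A secondary, smaller issue: your plan for the strict inequality when $G_t\notin T$ (``expand $L_t(f_T)$ as a signed combination of the $R(\cdot)$ over tubes of $T$'') omits the device the paper needs to make this work, namely first replacing $G_t$ by a bundle-compatible surrogate $G_\sigma$ with the same nodes and the same bundle-intersection sizes but the cheapest admissible edges, so that $\sum_{x\in G_t}f_T(x)\ge\sum_{x\in G_\sigma}f_T(x)$ by monotonicity of $f_T$ along each bundle's edge ordering; only then does the induction on tube size through the smallest containing tube $G_\vee$ of $T$ go through. You flag the sign control as delicate, but without the surrogate step the expansion you describe does not directly apply to an arbitrary $G_t$.
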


\noindent The proof of this is given at the end of the paper.

\subsection{}

We now extend the realization above to pseudographs with loops.  In particular, we show every pseudograph associahedra with loops can be reinterpreted as an open subcomplex of one without loops, via a subtle redescription of the loops.

\begin{defn}
For $G$ a connected pseudograph with loops, define an associated \emph{loop-free} pseudograph $G_{\LF}$ by replacing the set of loops attached to a node $v$ by a set of edges between $v$ and a new node $v'$. We call $v'$ a \emph{ghost node} of $G_\LF$. An example is given in Figure~\ref{f:loopfree}.
\end{defn}

\begin{figure}[h]
\includegraphics{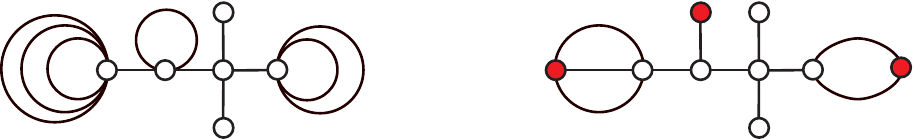}
\caption{A pseudograph $G$ and its associated loop-free version $G_{\LF}$.   The ghost nodes are shaded.}
\label{f:loopfree}
\end{figure}

\begin{prop} \label{p:subcomplex}
For a connected pseudograph $G$ with loops, the graph associahedron $\KG$ can be realized as an open subcomplex of $\KG_{\LF}$. 
\end{prop}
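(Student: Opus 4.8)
The plan is to exhibit an explicit poset embedding from the tubings of $G$ into the tubings of $G_\LF$, and then to identify the image as a downward-closed (open) subcomplex in the face poset of $\KG_\LF$. The natural candidate map sends a tube $G_t$ of $G$ to a tube of $G_\LF$ by the following prescription: for each loop of $G$ attached to a node $v$, recall that in $G_\LF$ this loop becomes an edge $vv'$ to a ghost node $v'$. Given a tube $G_t$ of $G$, I would first keep all of its nodes and non-loop edges, then translate its loop data: a loop at $v$ lying in $G_t$ should correspond to including the edge $vv'$ (and the ghost node $v'$), while a loop at $v$ excluded by $G_t$ should correspond to including the ghost node $v'$ but not the edge $vv'$. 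One must check this produces a legitimate tube of $G_\LF$ (connectedness and the ``at least one edge between adjacent nodes'' condition), which is where the ghost nodes being leaves makes life easy: a ghost node $v'$ is adjacent in $G_\LF$ only to $v$, so compatibility constraints involving $v'$ are controlled entirely by whether $v \in G_t$.

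The key steps, in order, are as follows. First I would pin down the tube-level map $\iota$ above and verify it sends tubes to tubes and preserves compatibility and proper containment, so that it induces a well-defined map on tubings; by Theorem~\ref{t:pseudo} this is the same as a map on the face posets of the two associahedra. Second, I would show $\iota$ is injective on tubings, so that $\KG$ sits inside $\KG_\LF$ as a subposet. Third, and most importantly, I would characterize the image: a tubing of $G_\LF$ lies in the image of $\iota$ precisely when it does \emph{not} contain any tube that ``captures'' a ghost node in a way unavailable to $G$ --- concretely, the forbidden faces are those whose tubes would correspond, back in $G$, to treating the ghost vertex as a genuine separate node rather than as the bookkeeping for a loop. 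The image should be exactly the set of faces avoiding a specified collection of faces of $\KG_\LF$, hence open, which is what ``open subcomplex'' requires. The correspondence between loops and ray directions in the construction (Theorem~\ref{t:trunc}, where loops contribute rays $\ray^\lambda$) and edges-to-ghost-nodes (which contribute ordinary bundle simplices) is the geometric shadow of this: passing from a loop to an edge $vv'$ replaces a ray by a bounded direction, and the subcomplex is what remains ``at infinity'' along that ray.

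The main obstacle I expect is the third step: precisely describing which faces of $\KG_\LF$ are excluded so that the image is genuinely a subcomplex that is open (downward closed under the face relation in the sense of the codimension ordering), and confirming that $\iota$ is a poset \emph{isomorphism} onto that image rather than merely an injection. The subtlety is that a single loop of $G$ carries two pieces of information in a tube --- present or excluded --- and both must be faithfully and surjectively matched to ghost-node configurations in $G_\LF$, so I must rule out ghost-node tubes that have no loop-theoretic meaning (for instance a tube consisting of a ghost node together with nodes not equal to its unique neighbor, or tubes that split a bundle-to-ghost edge set in ways a loop cannot). Handling a node carrying several loops, where Proposition~\ref{p:permuto}(2) tells us the relevant local factor is $\per_\lambda \times \ray$, is the case to check carefully, since there the ordering of the loop-edges and the permutohedral structure must match on both sides; I would verify this local model first and then argue the global statement assembles from these local pictures via Theorem~\ref{t:prod}.
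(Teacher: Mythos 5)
Your overall strategy---define a tube-level embedding of $\KG$ into $\KG_\LF$, check it is a poset injection, and identify the image as the complement of the faces involving ghost-node tubes---is the same as the paper's, which uses the map $\phi$ that simply replaces each loop of a tube by its associated edge of $G_\LF$. However, your tube-level map is misstated in the excluded-loop case, and as written it produces non-tubes. You prescribe that a loop at $v$ \emph{excluded} by $G_t$ corresponds to including the ghost node $v'$ but not the edge $vv'$. If $G_t$ contains $v$ and excludes \emph{every} loop at $v$ (e.g.\ the singleton tube $\{v\}$ when $v$ carries a loop), your image contains both $v$ and $v'$ with no edge of the bundle between them, which violates the definition of a tube (``includes at least one edge between every pair of nodes of $t$ if such edges of $G$ exist''---precisely the failure illustrated in Figure~\ref{f:tubings}(e)). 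The correct rule is the simpler one: a tube of $G$ acquires the ghost node $v'$ exactly when it contains at least one loop at $v$, each such loop becoming one edge of the bundle $vv'$; a tube containing $v$ with all loops at $v$ excluded maps to a tube of $G_\LF$ that omits $v'$ entirely. Excluded loops need no separate encoding on the $G_\LF$ side.

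With that fix, the characterization of the image you flag as the main obstacle becomes easy, and most of your list of ``forbidden'' configurations is vacuous: since a ghost node $v'$ is a leaf of $G_\LF$, any connected tube containing $v'$ and anything else must contain $v$ and at least one edge $vv'$, hence is the image of a tube containing a loop; and every subset of the ghost bundle corresponds to a subset of the loops, so nothing is ``split in ways a loop cannot'' be. The only tubes of $G_\LF$ outside the image are the singletons $\{v'\}$, which label facets; because ghost nodes are never adjacent to one another, these facets are pairwise compatible, and the image is the whole polytope minus a closed union of facets---a coface-closed, hence open, subcomplex (homeomorphic to a half-space). This is exactly the paper's argument; the detour you propose through the local model $\per_\lambda \times \ray$ and Theorem~\ref{t:prod} is not needed.
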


\begin{proof}
The canonical poset inclusion $\phi: \KG \to \KG_{\LF}$ replaces any loop of a tube by its associated edge in $G_{\LF}$. 
This clearly extends to an injection preserving inclusion of tubes, revealing $\KG$ as a subposet of $\KG_\LF$.  Moreover, since covering relations are preserved by $\phi$, $\KG$ is a connected subcomplex of $\K G_{\LF}$.
Indeed, this subcomplex is homeomorphic to a half-space of dimension $n-1+\red$, 
where $\red$ is the number of redundant edges of $G_{\LF}.$  To see this, note the only tubings not in the image of $\phi$ are those containing the singleton ghost tubes.  In $\K G_{\LF}$, those singleton tubes represent  a collection of  pairwise adjacent facets since, by construction, the ghost nodes are never adjacent to each other.  Therefore the image of $\phi$ is a solid polytope minus a union of facets which itself is homeomorphic to a codimension one disk.  
\end{proof}

\begin{cor}
The compact faces of $\KG$ correspond to tubings which exclude all loops.
\end{cor}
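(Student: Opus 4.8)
The plan is to read off the compact faces directly from the realization established in Proposition~\ref{p:subcomplex}. There, $\KG$ is identified with $\KG_{\LF}$ with the ghost facets deleted: the only tubings of $G_{\LF}$ missing from the image of $\phi$ are those containing a singleton ghost tube $\{v'\}$, and these index a union of closed facets of $\KG_{\LF}$. Since $\KG_{\LF}$ is a genuine compact polytope, a cell of $\KG$ is compact precisely when its closure inside $\KG_{\LF}$ avoids every removed facet, i.e.\ when the corresponding tubing $T$ admits no refinement (addition of compatible tubes) that contains some singleton ghost tube $\{v'\}$. So the whole problem reduces to deciding, for a tubing $T$ of $G$, whether $\phi(T)$ can be extended by a tube $\{v'\}$.

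Next I would analyze this extension condition combinatorially. Because each ghost node $v'$ is adjacent in $G_{\LF}$ only to its original node $v$ (ghost nodes are pairwise non-adjacent, by construction), the singleton tube $\{v'\}$ is compatible with a tube $A$ of $\phi(T)$ exactly when either $v' \in A$ or $v \notin A$; the sole incompatible case is $v \in A$ with $v' \notin A$, where the ghost edge connects them. Hence $\{v'\}$ can be added to $\phi(T)$ if and only if no tube $A \in \phi(T)$ has $v \in A$ and $v' \notin A$. Translating back through $\phi$, which inserts the ghost edge (and thus the node $v'$) precisely when a tube contains a loop at $v$, the obstruction ``$v \in A$, $v' \notin A$'' says exactly that $T$ contains a tube through $v$ omitting every loop at $v$. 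Therefore $\{v'\}$ is \emph{not} addable iff some tube of $T$ contains $v$ and excludes all loops there.

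I would then upgrade this per-node statement to the per-loop statement of the corollary using nesting. The tubes of a tubing sharing a common node $v$ are pairwise non-disjoint, hence pairwise nested, so they form a chain whose smallest member is the smallest tube of $T$ containing $v$. A loop $\ell$ at $v$ is excluded by some tube of $T$ iff it is excluded by this smallest tube, since exclusion forces containment of $v$ and the chain is increasing. Thus ``some tube omits all loops at $v$'' is equivalent to ``every loop at $v$ is excluded by some tube of $T$.'' Assembling this over all ghost nodes, the face for $T$ is compact iff, for each node, every incident loop is excluded by some tube of $T$, that is, iff $T$ excludes all the loops of $G$, which is the claim.

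The main obstacle, and the step deserving the most care, is the first reduction: pinning down which cells of the open subcomplex are compact and matching ``closure meets a removed facet'' with ``the tubing refines to contain a ghost singleton.'' Once that dictionary is set, the remaining steps are short compatibility and nesting arguments. A secondary subtlety worth flagging explicitly is that ``excludes all loops'' must be read as ``every loop is excluded by some tube,'' not ``no tube contains a loop'': the empty tubing (the full cone) contains no loops yet is non-compact, so the naive reading fails, and it is precisely the nesting argument that certifies the correct interpretation.
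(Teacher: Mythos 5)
Your argument is correct and follows the same route as the paper: both identify $\KG$ with $\KG_{\LF}$ minus the ghost facets via Proposition~\ref{p:subcomplex} and reduce compactness of a face to the incompatibility of $\phi(T)$ with every singleton ghost tube. The paper's proof is a single sentence stating only the forward implication; your version supplies the converse and the nesting argument making ``excludes all loops'' precise, but the underlying idea is identical.
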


\begin{proof}
For any tubing of $T$ in $\KG$ not excluding a loop, $\phi(T)$ will be compatible with the singleton ghost tube in $\KG_\LF$.
\end{proof}

As an added benefit of Theorem~\ref{t:hull} providing a construction of the polytope $\KG_m$, one gets a geometric realization of $\KG$ as a polytopal cone, for pseudographs $G$ with loops.  The result is summarized below, the proof of which is provided at the end of the paper.  Note that in addition to the combinatorial argument, we also see evidence that $\KG$ is conal:  If the removal of one or more hyperplanes creates a larger region with no new vertices, then that region must be unbounded.

\begin{cor} \label{c:loopreal}
The realization of $\KG$ is obtained from the realization of $\KG_\LF$ by removing the halfspaces associated to the singleton tubes of ghost nodes.
\end{cor}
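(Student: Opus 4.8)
The plan is to combine the polytopal realization of $\KG_\LF$ from Theorem~\ref{t:hull} with the purely combinatorial inclusion of Proposition~\ref{p:subcomplex}, pass to an H-representation, delete the ghost halfspaces, and then verify that the resulting unbounded polyhedron has face poset exactly the tubing poset of $G$. The realization construction (Theorem~\ref{t:hull}) and the combinatorial embedding (Proposition~\ref{p:subcomplex}) do most of the work; the one genuinely geometric point to establish is that the deletion creates no new vertices.

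First I would fix the realization $P := \KG_\LF$ as a simple convex polytope given by Theorem~\ref{t:hull} and rewrite it in its H-representation. Since $G_\LF$ is loop-free, Theorem~\ref{t:pseudo} guarantees $P$ is a simple polytope whose facets are in bijection with the single tubes of $G_\LF$, so we may write $P = \bigcap_{G_t} H_{G_t}^{+}$, where $H_{G_t}^{+}$ is the supporting halfspace of the facet indexed by the tube $G_t$. Among these, the singleton ghost tubes $\{v_i'\}$, one for each ghost node $v_i'$, index the facets $F_i$ to be removed; set $Q := \bigcap_{G_t \neq \{v_i'\}} H_{G_t}^{+}$. The goal is then to prove that $Q$ is a geometric realization of the cone $\KG$.

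Next I would read off the intended face correspondence. Proposition~\ref{p:subcomplex} identifies the tubing poset of $G$ with the subposet of $G_\LF$-tubings that avoid all singleton ghost tubes, and shows this subcomplex is a half-space of dimension $n-1+\red$; the preceding corollary further matches its compact faces with the loop-excluding tubings. I would use this to set up the poset isomorphism between the faces of $Q$ and the tubings of $G$: the compact faces of $Q$ should be precisely the faces of $P$ lying on no $F_i$ (the $\phi$-images of loop-excluding tubings), while the unbounded faces of $Q$ should arise from the faces of $P$ lying on some $F_i$, which under $\phi^{-1}$ correspond to the tubings of $G$ that contain loops.

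The crux, and the step I expect to be the main obstacle, is showing that deleting the ghost halfspaces introduces \emph{no new vertices}, i.e.\ that every vertex of $Q$ is already a vertex of $P$ lying on no $F_i$. A priori, relaxing several facet constraints at once can activate previously inessential constraints and produce new vertices, so the argument must exploit the special geometry of the ghost facets. Here the essential input is that the ghost nodes are pairwise non-adjacent, whence by Proposition~\ref{p:subcomplex} the facets $F_i$ are pairwise adjacent; this lets me delete them one at a time and argue inductively that each deletion merely opens up a genuine facet of a simple (now unbounded) polyhedron along its shared ridges, discarding exactly the vertices on that facet and creating unbounded directions but no new vertices. Once no new vertices appear, the unboundedness observation preceding the statement forces $Q$ to be a cone, and the face count from Proposition~\ref{p:subcomplex} then pins down the face poset of $Q$ as the full tubing poset of $G$, completing the identification of $Q$ with $\KG$.
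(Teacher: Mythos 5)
Your setup is the right one, and you have correctly isolated the crux: after deleting the ghost halfspaces one must show that no new vertices (more generally, no new faces) appear. But your argument for that crux does not go through. The claim that deleting a facet-defining halfspace of a simple polytope ``merely opens up a genuine facet \ldots creating unbounded directions but no new vertices'' is false in general: already for a pentagon, removing one edge's inequality can produce a bounded quadrilateral with a brand-new vertex, because the two neighboring supporting lines meet again beyond the deleted edge. Simplicity does not prevent this, and the pairwise adjacency of the ghost facets is irrelevant to it --- that adjacency is what makes the union of deleted facets a codimension-one disk in Proposition~\ref{p:subcomplex}, but it says nothing about whether the \emph{remaining} hyperplanes re-intersect on the far side of a deleted facet. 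So your induction has no base step: you never prove that a \emph{single} ghost-facet deletion creates nothing new.

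The paper closes this gap not by abstract polytope geometry but by re-running the explicit halfspace argument of Lemma~\ref{l:halfspaceinpoly}. There, every spurious intersection of hyperplanes (one indexed by a set of tubes containing an incompatible pair) is shown to violate one of a short list of specific inequalities $h_t^+$, where $G_t$ is one of the tubes $G_w$, $G_x$, $G_u$ (bundle-incompatible case) or $G_\vee$, $G_{\wedge_i}$ (bundle-compatible case). The observation that proves the corollary is that a singleton ghost tube never occurs in any of these roles --- the first three have at least two nodes, and a ghost singleton is never a join or a meet of an incompatible pair --- so the ghost halfspaces are never the ones doing the excluding. Hence the remaining halfspaces already cut off every non-face intersection, deleting the ghost halfspaces creates no new faces, and what is removed is exactly the set of faces on those hyperplanes, matched to tubings containing ghost singletons via the inclusion $\phi$ of Proposition~\ref{p:subcomplex}. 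To repair your proof you would need to supply precisely this kind of quantitative input from the realization; the combinatorial facts you cite cannot substitute for it.
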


\begin{exmp}
If $G$ is a path with two nodes and one loop, then $G_\LF$ is a path with three nodes.  Figure~\ref{f:2D-reveal}(a) shows the 2D associahedron $\KG_\LF$ from Figure~\ref{f:kwexmp}(a), where the right most node of the path $G_\LF$ can be viewed as a ghost node.  Part (b) shows $\KG$ as seen in Figure~\ref{f:2D-exmp}(b).  Notice that the facet of $\KG_\LF$ corresponding to the tube around the ghost node is removed in (a) to form the open subcomplex of (b).
\end{exmp}

\begin{figure}[h]
\includegraphics{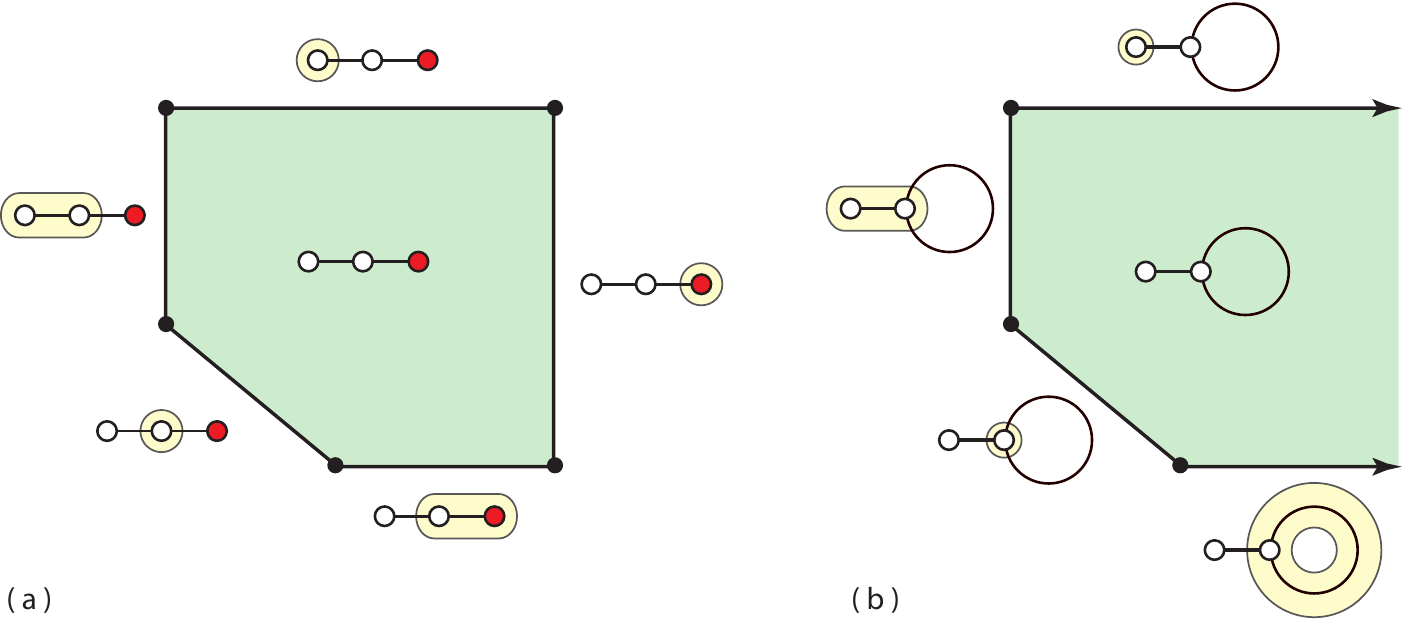}
\caption{(a) The polygon $\KG_\LF$ and (b) the polygonal cone $\KG$.}
\label{f:2D-reveal}
\end{figure}

\begin{exmp}
A 3D version of this phenomena is provided in Figure~\ref{f:3D-loop-reveal}.  Part (a) shows the 3D associahedron, viewed as the loop-free version $\KG_\LF$ to the pseudograph associahedron $\KG$ of part (b).  Indeed, the two labeled facets of (a), associated to tubes around ghost nodes, are removed to construct $\KG$.  The construction of $\KG$ for iterated truncations is given in Figure~\ref{f:3D-loop}.
\end{exmp}

\begin{figure}[h]
\includegraphics{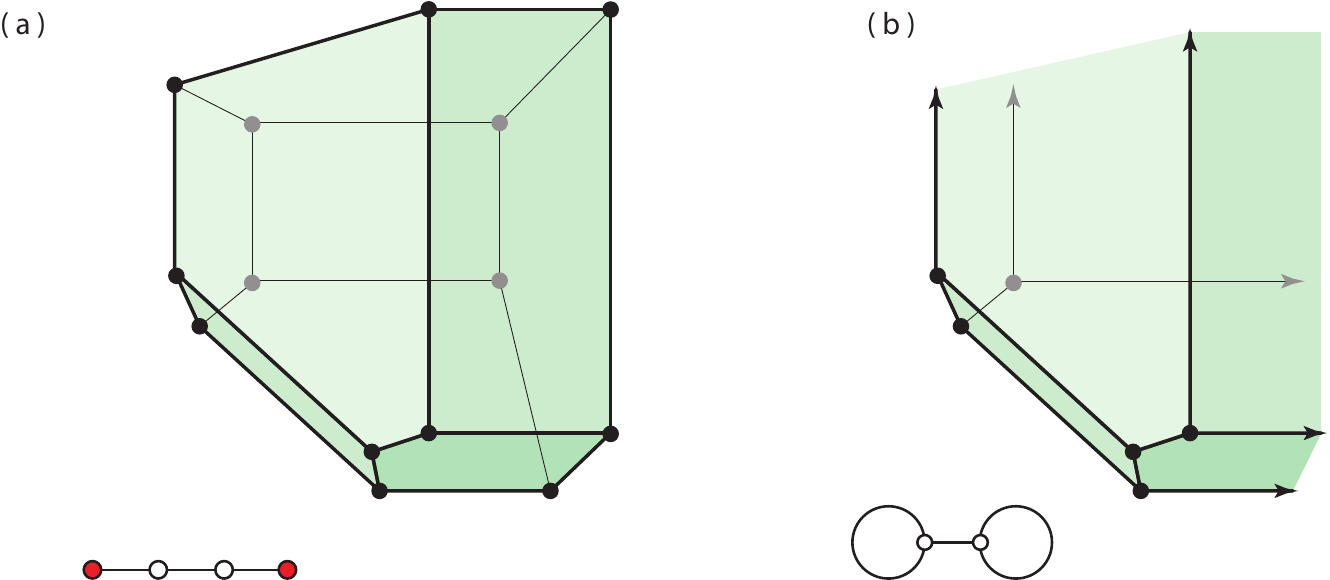}
\caption{(a) The associahedron $\KG_\LF$ and the (b) polyhedral cone $\KG$, where the faces of $\KG_\LF$ associated to tubes around ghost nodes have been removed.}
\label{f:3D-loop-reveal}
\end{figure}

\begin{exmp}
A similar situation can be seen in Figure~\ref{f:3D-permuto-flat}, part (a) showing the permutohedral prism $\KG_\LF$ and part (b) the cone $\KG$ after removing the back face of the prism.
\end{exmp}

%
%
\section{Proofs}  \label{s:proof}
\subsection{}

The proof of Theorem~\ref{t:trunc} is now given, which immediately gives a proof of Theorem~\ref{t:pseudo}.
We begin with a description of the structure of $\tbase_G$, the polytope given in~\eqref{e:midtrunc}.
\begin{enumerate}
\item Each face corresponds to a tubing consisting of full tubes $T^F$ and to a subset $S$ of the edges and loops of $G$.   The set $S$ contains at least one edge of each bundle. 
\item The subset $S$ produces a tube $G_S$ that contains all the nodes of $G$ as well as $S$.
\item The tubing for a given face is the \emph{intersection} of $T^F$ with $G_S$.
\item A face with a tubing $T_a$ contains a face with a tubing $T_b$, if and only if $T_a^F \subset T_b^F$ and $G_{S_a} \supset G_{s_b}$.
\item Given two faces with tubings $T_a$ and $T_b$, their intersection is the intersection of $T_a^F \cup T_b^F$ with $G_{S_a} \cap G_{S_b}$ assuming the former is a tubing and the latter is a tube.  Otherwise the faces do not intersect.
\end{enumerate}
In order to describe the effect of truncation on these tubings, we define  \emph{promotion}, an operation on sets of tubings that was developed in \cite[Section 2]{cd}.

\begin{defn}
The \emph{promotion} of a tube $G_t$ in a set of tubings $\tubingset{}$ means adding to $\tubingset{}$ the tubings 
$$\{T \cup \{G_t\} \suchthat T \in \tubingset{}, G_t \text{ is compatible with all } G_{t'} \in T \} \, .$$
Note that this $T$ may be empty.  The new tubings are ordered such that $T \cup \{G_t\} \prec T$, and $T \cup \{G_t\} \prec T' \cup \{G_t\}$ if and only if $T \prec T'$ in $\tubingset{}$.
\end{defn}

All valid combinations of full tubes of $G$ already exist as faces of $\tbase_G$.  They are also already ordered by containment.  Therefore, we may first conclude from this definition that promoting the non-full tubes is sufficient to produce the set of all valid tubings of $G$, resulting in $\KG$.
Given a polytope whose faces correspond to a set of tubings, promoting a tube $G_F$ is equivalent to truncating its corresponding face $F$ so long as the subset of tubings compatible with $G_F$ corresponds to the set of faces that properly intersect or contain $F$.  Verifying this equivalence for each prescribed truncation is sufficient to prove the theorem.  

\begin{proof}[Proof of Theorem~\ref{t:trunc}]
We may proceed by induction, relying on the description of $\tbase_G$ above and leaving the computations of intersections to the reader.  Consider the polytope $P$ in which all the faces before $F$ in the prescribed order have been truncated.   Suppose that until this point, the promotions and truncations have been equivalent, that is, there is a poset isomorphism between the base polytope after a set of truncations and the sets of base tubings after the set of corresponding tubes are promoted.  Note that in $P$, the faces that intersect (but are not contained in) $F$ are 
\begin{enumerate}
\item faces that properly intersected or contained $F$ in $\tbase_G$
\item faces corresponding to tubes promoted before $G_F$ and compatible with $G_F$. 
\end{enumerate}
Since faces created by truncation inherit intersection data from both the truncated face and the intersecting face, we may include (by induction if necessary) any intersection of the above that exists in $P$.  
Conversely, the faces that do not intersect $F$ in $P$ are
\begin{enumerate}
\item faces that did not intersect $F$ in $\tbase_G$
\item faces that did intersect $F$ but whose intersection was contained in a face truncated before $F$ and was thus removed
\item faces corresponding to tubes promoted before $G_F$ but incompatible with $G_F$
\item any intersection of the above that exists in $P$.
\end{enumerate}
We have given a description of when no intersection exists between two faces in $\tbase_G$, as case (1) above.  Most tubings incompatible with $G_F$ can be shown to belong to such a group. 
Some tubes $G_t$ that intersect $G_F$ fall into case (2), where their intersection corresponds to $\{G_t , G_t \cap G_F\}$.  It is contained in the face corresponding to $\{G_F \cap G_t\}$, a face found before $G_F$ in the containment order.  Thus no intersection is present in $P$.

The tubings compatible with $G_F$ correspond to the faces that properly intersect or contain $F$. Promoting $G_F$ and truncating $F$ will produce isomorphic face/tubing sets.  The conclusion of the induction is that the prescribed truncations will produce a polytope isomorphic to the set of tubings of $G$ after all non-full tubes have been promoted, resulting in $\KG$.
\end{proof}

\subsection{}

We now provide the proof for Theorem~\ref{t:hull}.  As before, let $G$ be a pseudograph without loops, and let $T$ be a maximal tubing of $G$. Moreover, let $\CH$ denote the polytope obtained from the convex hull of the points in Equation~\eqref{e:hull}.  Close inspection reveals that $\CH$ is contained in an intersection of the hyperplanes defined by the equations:
\begin{align*}
h_V \ : \ \ & \sum_{v \in V} f_T(v) \ =\ c^{|V|}\\
h_{B_i} \ : \ \ & \sum_{e \in B_i} f_T(e) \ = \ c^{b_i}
\end{align*}
where $|V|$ is the number of nodes of $G$.
To each tube $G_t \in T$, let
$$\Lambda(G_t) \ = \ c^{\; |V(t)|} \ + \ \sum_i c^{\; E(i,t)} \ + \ \left|G - G_t\right|^2 \, .$$
These $\Lambda$ functions define halfspaces which contain the vertices associated to that tube:
$$h_t^+ \ : \ \ \sum_{x \in G_t} f_T(x) \ \geq \ \Lambda(G_t)\, .$$  
Proving that $\CH$ has the correct face poset as $\KG$ is mostly a matter of showing the equivalence of $\CH$ and the region
$$\HY \ := \ h_V \ \cap \ \bigcap_i h_{B_i} \ \cap \ \bigcap_{G_t \in T} h_t^+ \, .$$

\begin{defn}
Two tubes $G_a$ and $G_b$ of $G$ are \emph{bundle compatible} if  for each $i$, one of the sets $E(i,a)$ and $E(i,b)$ contains the other.  Note that the tubes of any tubing $T$ are pairwise (possibly trivially) bundle compatible.
\end{defn}

\begin{lem} \label{l:inequality}
Let $G_{a}$ and $G_{b}$ be adjacent or properly intersecting bundle compatible tubes. Suppose their intersection is a set of tubes $\{G_{\wedge_i}\}$, while $G_{\vee}$ is a minimal tube that contains both.  Let $E_\vee$ be the set of edges contained in $G_{\vee}$ but not $G_{a}$ or $G_{b}$.  Then for any tubing $T$ containing $G_{\vee}$,  
$$\Lambda(G_{a}) \ < \ \Lambda(G_{\vee}) \ - \ \Lambda(G_{b}) \ + \ \sum_i \Lambda(G_{\wedge_i}) \ - \ \sum_{e \in E_\vee} f_T(e).$$
\end{lem}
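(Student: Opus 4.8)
The plan is to prove the rearranged, equivalent form
\[
\Bigl(\Lambda(G_a)+\Lambda(G_b)-\Lambda(G_\vee)-\sum_i\Lambda(G_{\wedge_i})\Bigr)+\sum_{e\in E_\vee}f_T(e) \ < \ 0,
\]
by splitting every $\Lambda(G_t)=c^{|V(t)|}+\sum_i c^{|E(i,t)|}+|G-G_t|^2$ into its \emph{node part}, its \emph{bundle part}, and its \emph{complement part}, and estimating the three resulting alternating sums separately. First I would record the bookkeeping forced by the hypotheses. Because $G_\vee$ is a minimal tube containing $G_a$ and $G_b$, we have $V(\vee)=V(a)\cup V(b)$, so inclusion--exclusion over the components $G_{\wedge_i}$ of $G_a\cap G_b$ gives the node identity $|V(a)|+|V(b)|=|V(\vee)|+\sum_i|V(\wedge_i)|$; the analogous identity holds bundle by bundle for the edge counts, and the size identity $|G_a|+|G_b|=|G_\vee|-|E_\vee|+\sum_i|G_{\wedge_i}|$ records the extra connecting edges. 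Bundle compatibility is what makes these usable: within each bundle the sets $E(i,a),E(i,b)$ are nested, so the intersection picks up at least the smaller edge set of every shared bundle. A short connectivity argument then shows that whenever the node sets are nested the intersection is a single tube, and that the edges of any one bundle lying in the intersection all belong to a single component $G_{\wedge_i}$.

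The estimates are driven by the largeness of $c=|G|^2$, which places the three parts on different scales: each complement term satisfies $0\le |G-G_t|^2<c$, whereas the node and bundle terms are powers $c^{\ge 0}$. For the exponential parts I would use convexity of $x\mapsto c^x$ together with the identities above. With $|V(\vee)|=\max$ versus $>\max$ as the dividing line, the node part is always $\le 0$, and when $V(a),V(b)$ do not nest it is strictly negative of order $c^{|V(\vee)|}\ge c^2$; the nesting and connectivity facts make the bundle part telescope to $0$ in the degenerate (nested) regime and leave only bounded residue otherwise. For the complement part I would instead use convexity of $x\mapsto x^2$: writing $\bar t=|G-G_t|$, the relation $\bar a+\bar b=\bar v+\bar w$, valid exactly when $E_\vee=\emptyset$, together with $\bar v\le\bar a,\bar b$ and $\bar a,\bar b\le\bar w$, exhibits $\{\bar v,\bar w\}$ as more spread than $\{\bar a,\bar b\}$, so $\bar a^2+\bar b^2-\bar v^2-\bar w^2\le 0$, with strict inequality unless one tube contains the other.

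I would then conclude by the dichotomy the hypotheses provide. If $G_a$ and $G_b$ are adjacent, then $E_\vee\ne\emptyset$ but $|V(\vee)|=|V(a)|+|V(b)|>\max$, so the node part contributes a strictly negative term of order $c^{|V(\vee)|}$; the only remaining worry is the connecting bundle, whose positive correction $\sum_{e\in E_\vee}f_T(e)$ must be matched against the strongly negative bundle term $-c^{|E(i,\vee)|}$ coming from that same bundle (indeed $\sum_{e\in B_i}f_T(e)=c^{b_i}$ by the hyperplane $h_{B_i}$), so the balance stays negative. If instead $G_a$ and $G_b$ properly intersect, then $E_\vee=\emptyset$ and $\sum_i G_{\wedge_i}$ is nonempty; here the node and bundle parts are $\le 0$ and the strictly negative complement part supplies the sharp inequality, with $f_T>0$ (checked directly from both branches of its defining formula) guaranteeing no correction term spoils the sign. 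The main obstacle is precisely this bookkeeping: verifying that the exponential parts telescope as claimed, which rests on the connectivity consequences of bundle compatibility, and that strictness is produced from two genuinely different sources in the two regimes---node-exponential domination in the adjacent case and complement-quadratic convexity in the intersecting case---while confirming that the connecting-bundle term and the $f_T(e)$ correction never combine to overturn the leading sign.
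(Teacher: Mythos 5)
Your decomposition of $\Lambda$ into node, bundle, and complement parts, the scale separation via $c=|G|^2$, and the dichotomy between $|V(\vee)|>\max(|V(a)|,|V(b)|)$ (exponential dominance of $c^{|V(\vee)|}$) and $|V(\vee)|=\max$ (strict convexity of $x\mapsto x^2$ applied to the complement terms) is exactly the paper's argument, which the paper compresses into three sentences and leaves to the reader. There is, however, one genuine gap: the treatment of $\sum_{e\in E_\vee}f_T(e)$. For $e\in E_\vee$ lying in a bundle $B_i$, the only bound you offer is $f_T(e)\le\sum_{e'\in B_i}f_T(e')=c^{b_i}$, which you propose to match against the bundle term $-c^{|E(i,\vee)|}$. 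But minimality of $G_\vee$ forces $|E(i,\vee)|=1$, so that term is only $-c$, while a single edge of a bundle can a priori carry weight as large as $c^{b_i-1}(c-1)$; for $b_i\ge 2$ this is not absorbed by $-c$, and for $b_i\ge|V(\vee)|$ it is not even absorbed by the dominant node term $-c^{|V(\vee)|}$, so the step as written fails. The missing ingredient is the hypothesis that $T$ contains $G_\vee$, which your argument does not exploit: since $G_\vee$ contains $e$ and (by minimality) no other edge of $B_i$, compatibility with $G_\vee$ forces every tube of $T$ meeting $B_i$ to contain $e$, so $e$ is the first edge $e(i,1)$ in the $T$-ordering of its bundle and $f_T(e)=c+\sum_{x=1}^{b_i-1}\bigl(2|G-G_{e(i,x)}|-1\bigr)<3c$. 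Only with this bound does $\sum_{e\in E_\vee}f_T(e)=O(|G|\cdot c)$ get swallowed by the node surplus $-\tfrac12 c^{|V(\vee)|}\le-\tfrac12 c^2$.

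A second, smaller slip: in your concluding dichotomy you assert that properly intersecting tubes give $E_\vee=\emptyset$. That is false when both tubes have private nodes, since a bundle joining a node of $V(a)\setminus V(b)$ to a node of $V(b)\setminus V(a)$ must contribute an edge to $G_\vee$ (and hence to $E_\vee$) even though $G_a\cap G_b\ne\emptyset$. The correct dividing line is the one you state in your middle paragraph, $|V(\vee)|>\max$ versus $|V(\vee)|=\max$ (the latter meaning the node sets are nested): in the former case the node term dominates and the $E_\vee$ issue above must be resolved as indicated; only in the latter is $E_\vee$ automatically empty, the exponential parts cancel exactly, and the complement-convexity argument supplies the strict inequality.
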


\begin{proof}
The intersections with each bundle contribute equally to both sides.  If $G_{\vee}$ contains more nodes than the others, then we simply note the dominance of the $k^{|V(\vee)|}$ term and place bounds on the remaining ones.  If not, the sides are identical up to the $|G-G_t|^2$ terms, which provide the inequality.
\end{proof}

\begin{lem} \label{l:halfspace}
For any tubing $T$, and any tube $G_t$, 
\begin{equation}
\label{e:halfspace}
\sum_{x \in G_t} f_T(x) \ \geq \ \Lambda(G_t)
\end{equation}
with equality if and only if $G_t \in T$.
In particular, $\CH \subseteq \HY$, and only those vertices of $\CH$ that have $G_t$ in their tubing are contained in $h_t$.
\end{lem}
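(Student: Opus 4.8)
The plan is to isolate the pointwise inequality together with its equality clause, since the two ``in particular'' assertions then follow formally. Write $\sigma(G_t)$ for $\sum_{x \in G_t} f_T(x)$. Granting $\sigma(G_t) \ge \Lambda(G_t)$ always, with equality exactly when $G_t \in T$, each vertex of $\CH$ --- the point attached to some maximal tubing $T'$ --- already lies on $h_V$ and on every $h_{B_i}$, since those hyperplanes were observed to contain $\CH$, and it lies in each $h_t^+$ with $G_t \in T$ by applying the inequality to the tubing $T'$ and the tube $G_t$. Hence every vertex lies in $\HY$, so $\CH \subseteq \HY$ by convexity, and such a vertex meets the bounding hyperplane $h_t$ precisely when equality holds, i.e. precisely when $G_t \in T'$. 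So the whole content is the displayed inequality for a fixed maximal $T$.

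Two facts will drive the argument. First, $\sigma$ is \emph{exactly additive}: for tubes $G_a,G_b$ with minimal common tube $G_\vee$, intersection components $\{G_{\wedge_i}\}$, and $E_\vee$ the edges of $G_\vee$ lying in neither $G_a$ nor $G_b$, inclusion--exclusion on node sets and on edge sets gives
\[
\sigma(G_a)+\sigma(G_b)=\sigma(G_\vee)+\sum_i \sigma(G_{\wedge_i})-\sum_{e\in E_\vee} f_T(e).
\]
Second, the recursive definition of $f_T$ is rigged so that $\sigma(G_s)=\Lambda(G_s)$ for every $G_s\in T$, while $h_V$ and the $h_{B_i}$ give $\sigma(G)=c^{|V|}+\sum_i c^{b_i}=\Lambda(G)$; thus the whole graph $G$ acts as a virtual top tube at which equality also holds, and every tube sits inside a minimal member of $T\cup\{G\}$. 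This settles the equality case $G_t\in T$ outright.

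For $G_t\notin T$ I would argue by downward induction on $|G_t|$, with base case $G_t=G$. By maximality of $T$, some $G_b\in T$ is incompatible with $G_t$; choosing $G_b$ so that the components of $G_t\cap G_b$ are themselves tubes of $T$, I set $G_a=G_t$ in the additive identity and substitute $\sigma(G_b)=\Lambda(G_b)$ and $\sigma(G_{\wedge_i})=\Lambda(G_{\wedge_i})$ (base cases), obtaining $\sigma(G_t)=\sigma(G_\vee)-\Lambda(G_b)+\sum_i \Lambda(G_{\wedge_i})-\sum_{e\in E_\vee} f_T(e)$. Since $G_\vee\supsetneq G_t$ is strictly larger, the inductive hypothesis gives $\sigma(G_\vee)\ge\Lambda(G_\vee)$; feeding this in and invoking the strict submodularity of Lemma~\ref{l:inequality} for the bundle compatible pair $G_a=G_t$, $G_b$ yields $\sigma(G_t)\ge\Lambda(G_\vee)-\Lambda(G_b)+\sum_i \Lambda(G_{\wedge_i})-\sum_{e\in E_\vee} f_T(e)>\Lambda(G_t)$, the desired strict inequality. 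When the join of the chosen pair has not yet reached $T\cup\{G\}$, I iterate the merge, each step enlarging the tube and lowering the induction parameter until the join lands in $T\cup\{G\}$ and Lemma~\ref{l:inequality} applies.

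The crux is the companion-tube bookkeeping in the inductive step. Lemma~\ref{l:inequality} requires $G_t$ and $G_b$ to be \emph{bundle compatible}; when they are not, I must first straighten the offending bundles using the within-bundle permutohedral structure --- exactly what the elaborate edge values $f_T(e)$ are engineered to control --- and check that this reduction also decreases the induction parameter. More delicate still is guaranteeing a choice of $G_b\in T$ whose meet components $G_{\wedge_i}$ lie in $T$, so that they are genuine base cases, while the join climbs toward $T\cup\{G\}$. The additive identity mixes the \emph{larger} join with the \emph{smaller} meets, so no naive single-parameter induction closes; the real work is to show such a $G_b$ always exists, equivalently to organize a complexity measure under which both the join reduction and the meet reduction strictly descend. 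Verifying this selection, and the bundle straightening, is where I expect the genuine difficulty of the proof to lie.
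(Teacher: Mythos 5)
Your reduction of the two ``in particular'' claims to the displayed inequality, and your treatment of the equality case via the recursive definition of $f_T$, match the paper. But the inductive step for $G_t \notin T$ has a genuine gap, and you have in fact located it yourself: your scheme applies the inductive hypothesis to the \emph{larger} join $G_\vee$ while needing base cases for the \emph{smaller} meets $G_{\wedge_i}$, and you concede that no naive single-parameter induction closes without a selection argument you have not supplied. That selection argument is not a routine verification; it is the content of the proof, and as written your induction does not close.

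The paper avoids the problem by never inducting on the join. It first straightens bundles in a single step: replace $G_t$ by a tube $G_\sigma$ with the same node set and the same number of edges in each bundle, chosen to be bundle compatible with every tube of $T$; since $f_T$ is increasing along the ordering $e(i,1), e(i,2), \ldots$ within each bundle and $\Lambda$ depends only on the sizes $|E(i,t)|$, this gives $\sum_{x\in G_t} f_T(x) \ge \sum_{x\in G_\sigma} f_T(x)$ with $\Lambda(G_\sigma)=\Lambda(G_t)$, and equality only if $G_t = G_\sigma$. It then takes $G_\vee$ to be the \emph{smallest tube of $T$ (or $G$ itself) containing $G_\sigma$}, and the companion subtubes to be the maximal subtubes of $G_\vee$ lying in $T$. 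For all of these, the equality $\sum_{x \in G_s} f_T(x) = \Lambda(G_s)$ holds outright because they belong to $T\cup\{G\}$; the only objects requiring the inductive hypothesis are the intersections $G_{\wedge_i}$ of those subtubes with $G_\sigma$, which are strictly smaller than $G_t$. Hence a single upward induction on the size of the tube suffices, with Lemma~\ref{l:inequality} supplying the final strict inequality. The missing idea in your writeup is precisely this choice of $G_\vee$ and its companions from $T$ itself, which converts your two-sided descent problem into a one-sided one.
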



\begin{proof}
If $G_t \in T$, the equality of Equation~\eqref{e:halfspace} follows directly from the definition of $f_T$.
Suppose then that $G_t \notin T$.  We proceed by induction on the size of $G_t$.
First, produce a tube $G_{\sigma}$ which contains the same nodes as $G_t$, and the same size intersection with each bundle, but is bundle compatible with the tubes of $T$.  Naturally $\Lambda(G_{\sigma})=\Lambda(G_t)$, but since $f_T$ is an increasing function over the ordered $e(i,j)$ edges of $G$, we get
$$\sum_{x \in G_t} f_T(x) \ \geq \ \sum_{x \in G_\sigma} f_T(x)$$ with equality only if $G_t=G_{\sigma}$.

Let $G_\vee$ be the smallest tube of $T$ that contains $G_\sigma$ (or all of G if none exists).  If $G_\vee = G_\sigma$ then the inequality above is strict and the lemma is proven.  Otherwise the maximal subtubes $\{G_{\vee_i}\}$ of $G_\vee$ are disjoint, and each either intersects or is adjacent to $G_\sigma$.  If we denote the intersections as $\{G_{\wedge_i}\}$ and the set of edges of $G_\vee$ contained in none of these subtubes by $E_\vee$, then as a set, 
$$G_\sigma \ = \ G_\vee \ - \ \bigcup_i G_{\vee_i} \ + \ \bigcup_i G_{\wedge_i} \ - \ \bigcup_{e \in E_\vee} f_T(e) \, .$$
The tubes mentioned in the right hand side are all in $T$, except perhaps the intersections.  Fortunately, the inductive hypothesis indicates that  
$$\sum_{x \in G_{\wedge_i}} f_T(x) \ \geq \ \Lambda( G_{\wedge_i})\, .$$ 
Thus we are able to rewrite and conclude
$$\sum_{x \in G_\sigma} f_T(x) \ \geq \ \Lambda (G_\vee) \ - \ \sum_i \Lambda(G_i) \ + \ \sum_i \Lambda (G_{\wedge_i}) \ - \ \sum f_T(e_i) \ > \ \Lambda(G_t)$$
by repeated applications of Lemma \ref{l:inequality}.
\end{proof}

\begin{lem} \label{l:halfspaceinpoly}
$\HY \subseteq \CH$.
\end{lem}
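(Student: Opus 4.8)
The plan is to upgrade the inclusion $\CH \subseteq \HY$ from Lemma~\ref{l:halfspace} to an equality by showing that every vertex of $\HY$ is one of the points of~\eqref{e:hull}. Both regions sit inside the affine subspace $A := h_V \cap \bigcap_i h_{B_i}$, whose dimension is $(n+k)-1-p = n-1+\red$ (with $n$ nodes, $k$ edges, and $p$ bundles), matching $\dim\KG$ from Theorem~\ref{t:pseudo}; so it suffices to argue inside $A$, which I abbreviate as having dimension $d$.

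First I would verify that $\HY$ is a bounded, $d$-dimensional polytope, so that $\HY=\CH$ follows once its vertices are identified. The halfspaces $h_t^+$ for the singleton node tubes bound each node coordinate from below, and $h_V$ fixes their sum; applying $h_t^+$ to the one-edge tubes (an edge together with its endpoints) then bounds each edge coordinate from below, while the $h_{B_i}$ fix the bundle sums. Hence every coordinate is confined to a bounded interval and $\HY$ is a polytope; it is $d$-dimensional since it contains the full-dimensional $\CH$.

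The heart of the matter is to show that the set $\mathcal A = \{G_t : q \in h_t\}$ of tubes whose halfspace is tight at a vertex $q$ of $\HY$ is a maximal tubing. Compatibility is where the submodularity packaged in Lemma~\ref{l:inequality} enters: the set identity $\sum_{x\in G_a}q_x + \sum_{x\in G_b}q_x = \sum_{x\in G_\vee}q_x + \sum_i\sum_{x\in G_{\wedge_i}}q_x - \sum_{e\in E_\vee}q_e$, together with the standing inequalities $\sum_{x\in G_\vee}q_x \geq \Lambda(G_\vee)$ and $\sum_{x\in G_{\wedge_i}}q_x\geq \Lambda(G_{\wedge_i})$, forces a lower bound on $\Lambda(G_a)+\Lambda(G_b)$ that contradicts the strict inequality of Lemma~\ref{l:inequality} whenever $G_a,G_b\in\mathcal A$ are incompatible. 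Thus $\mathcal A$ is pairwise compatible, i.e.\ a tubing; since $q$ is a vertex of the $d$-dimensional region $A$, the hyperplanes $\{h_t : G_t\in\mathcal A\}$ must contain $d$ linearly independent members, and because no tubing of $G$ has more than $d=n-1+\red$ tubes (Theorem~\ref{t:pseudo}), $\mathcal A$ consists of exactly $d$ tubes and is maximal.

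Writing $T=\mathcal A$ for this maximal tubing, the point $p_T$ of~\eqref{e:hull} lies in $A$ and on every $h_t$ with $G_t\in T$ by Lemma~\ref{l:halfspace}; as these tightness conditions cut out a single point of the $d$-dimensional space $A$, we get $q=p_T\in\CH$. Every vertex of the bounded polytope $\HY$ therefore belongs to $\CH$, exhibiting $\HY$ as the convex hull of its vertices and giving $\HY\subseteq\CH$. I expect the main obstacle to be the compatibility step: Lemma~\ref{l:inequality} is phrased for the coordinates $f_T$ of an actual vertex, whereas $q$ is only a generic point of $\HY$, and when $G_a,G_b$ are merely adjacent (so $E_\vee\neq\emptyset$) the troublesome term $\sum_{e\in E_\vee}q_e$ must be controlled by the bundle equations $h_{B_i}$ rather than dismissed. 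Handling this, and first reducing to bundle-compatible tubes exactly as in the proof of Lemma~\ref{l:halfspace} (replacing a tube by a bundle-compatible surrogate strictly lowers its coordinate sum unless they coincide), is the delicate part; the dimension count and the final uniqueness argument are then routine. Alternatively, one can induct on $d$ through the facets: by Theorem~\ref{t:prod} each facet $\HY\cap h_t$ is the realization of a product $\KG_t\times M$ of lower-dimensional pseudograph associahedra, so by induction $\HY\cap h_t=\CH\cap h_t$, whence $\partial\HY\subseteq\CH$ and a ray argument from an interior point of $\CH$ forces $\HY\subseteq\CH$.
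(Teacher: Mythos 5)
Your overall strategy is the paper's: show $\HY$ is bounded, pass to a vertex of $\HY$ (the paper takes a vertex of $\HY - \CH$), and show that the set of tubes whose hyperplanes are tight there cannot contain an incompatible pair, so that it is a tubing and the vertex is a hull point. Your boundedness argument and the closing dimension count are fine, and your treatment of an incompatible but \emph{bundle-compatible} pair --- adding the two tight equalities, rewriting via the inclusion--exclusion identity over $G_\vee$, $\{G_{\wedge_i}\}$ and $E_\vee$, and contradicting Lemma~\ref{l:inequality} --- is exactly the paper's case (2).

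The gap is the \emph{bundle-incompatible} case. You propose to reduce to bundle-compatible tubes ``exactly as in the proof of Lemma~\ref{l:halfspace},'' but that reduction is unavailable here: the surrogate inequality $\sum_{x\in G_t} f_T(x) \geq \sum_{x\in G_\sigma} f_T(x)$ rests on $f_T$ being increasing along the ordering $e(i,1), e(i,2), \ldots$ of each bundle, and that ordering is defined by a maximal tubing $T$. At a vertex $q$ of $\HY$ you do not yet have a tubing --- producing one is the whole point --- so there is no ordering and no monotonicity to invoke. The paper instead handles this case by a separate, tubing-free halfspace argument: for a bundle $B_i$ witnessing the incompatibility it introduces the maximal tube $G_u$ with $E(i,u)=E(i,a)\cup E(i,b)$, and shows that tightness of $h_a$ and $h_b$, together with the lower bounds $h_w^+$ (full tubes on the node sets) and $h_x^+$ (maximal tubes with prescribed bundle intersection), leaves too little mass in the $B_i$-coordinates to satisfy $h_u^+$, so $q\notin\HY$. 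You need some such argument phrased purely in terms of the defining halfspaces. (Your other worry --- that Lemma~\ref{l:inequality} carries a $\sum_{e\in E_\vee} f_T(e)$ term while your identity produces $\sum_{e\in E_\vee} q_e$ --- is a genuine subtlety, but it is present in the paper's own write-up as well and is resolved by bounding those edge coordinates with the same halfspace data; it is not where your argument breaks.)
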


\begin{proof}
Particular half spaces impose especially useful bounds of the value of certain coordinates within $\HY$.  For instance, if $G_w$ is a full tube, then 
$$h_w^+ \ : \ \ \sum_{v \in V(w)} f_T(v) \ \geq \ c^{|V(w)|} + |G-G_w|^2 \, .$$
Choosing the maximal tube $G_x$ that intersects bundle $B_i$ in a particular subset of edges $X$ produces 
$$h_x^+ \ : \ \ \sum_{e \in X} f_T(e) \ \geq \ c^{|X|} + |G-G_x|^2 \, .$$
Applying these to single nodes and single edges gives a lower bound in each coordinate.  The hyperplanes $h_V$ and $h_{B_i}$ supply upper bounds, so $\HY$ is bounded.

Suppose $\HY - \CH$ is not empty.  Since $\CH$ is convex, by construction, $\HY - \CH$ must have a vertex $v^*$ outside $\CH$, at the intersection of several $h_t$ hyperplanes.  These hyperplanes correspond to a set $T^*$ of tubes of $G$.  This $T^*$ contains at least one pair of incompatible tubes $G_a$ and $G_b$, for otherwise it would be a tubing and $v^*$ would be in $\CH$.

\begin{enumerate}
\item
If $G_a$ and $G_b$ are bundle incompatible
in some bundle $B_i$, then we produce the maximal tube $G_u$ that intersects $B_i$ in $E(i,a) \cup E(i,b)$.  As above, $G_u$ produces a bound on the $E(i,u)$ coordinates, yielding
$$h_u^+ \ : \ \ \sum_{e \in E(i,u)} f_T(e) \ \geq \ c^{|E(i,u)|} + |G-G_u|^2 \, .$$ 
The half spaces $h_w^+$ and $h_x^+$ above produce lower bounds on the sum of the vertex coordinates of $G_a$ and $G_b$. 
Subtracting these from $\Lambda (G_a)$ and $\Lambda(G_b)$ leaves a maximum of 
$$c^{|E(i,a)|} \ + \ \left|G-G_a\right|^2 \ + \ c^{|E(i,b)|} \ + \left|G-G_b\right|^2$$
for $\sum_{E(i,a)} f_T(e)$ and $\sum_{E(i,b)} f_T(e)$, which is insufficient for the $G_u$ requirement above.  We conclude that $v^*$  is either outside $h_u^+$ or outside one of the halfspaces $h_w^+$ or $h_x^+$.  Either way, $v^*$ is not in $\HY$.  

\item
On the other hand, if $G_a$ and $G_b$ are bundle compatible, Lemma \ref{l:inequality} can be rearranged:
$$\Lambda(G_\vee) \ > \ \Lambda(G_a) \ + \ \Lambda(G_b) \  - \ \sum_i \Lambda(G_{\wedge_i}) \ + \ \sum_{e \in E_{\vee}} f_T(e) \, .$$
Thus $v^*$ is either not in one of the $h_{\wedge_i}^+$ or not in $h_\vee^+$.
Therefore $v^*$ is not in $\HY$. 
\end{enumerate}
This contradiction proves the Lemma.
\end{proof}

\begin{proof}[Proof of Theorem~\ref{t:hull}]
Lemmas~\ref{l:halfspace} and~\ref{l:halfspaceinpoly} show that $\CH = \HY$.
Consider the map taking a tubing $T$ of $G$ to the face
$$\CH \cap \bigcap_{G_t \in T} h_t$$ of $\CH$.
By Lemma~\ref{l:halfspace}, each tubing maps to a face of $\CH$ containing a unique set of vertices. Each face is an intersection of hyperplanes that contains such a vertex (and hence corresponds to a subset of a valid tubing).  Since it clearly reverses containment, this map is an order preserving bijection.
\end{proof}

\begin{proof}[Proof of Corollary~\ref{c:loopreal}]
We remark that notation (and the entire reasoning) in this proof is being imported from the proof of Lemma~\ref{l:halfspaceinpoly}.
If  $v$ is a ghost node, then it is not $G_w$, $G_x$ or $G_u$ for a pair of bundle incompatible tubes (since those tubes all have at least 2 nodes).  It also is neither $G_\vee$ nor $G_{\wedge_i}$ for any pair of bundle compatible tubes. Thus $h_t^+$ excludes no intersection of hyperplanes. Its removal creates no new faces, and removes only those faces corresponding to tubings containing $v$.  The identification of these faces is the canonical poset inclusion $\phi$ from the proof of Proposition~\ref{p:subcomplex}.
\end{proof}

%
%
\bibliographystyle{amsplain}

\end{document}